\newtheorem{thm}{Theorem}[section]
\newtheorem{lemma}[thm]{Lemma}
\newtheorem{theorem}[thm]{Theorem}
\theoremstyle{remark}
\newtheorem*{remarks}{Remarks}
\newtheorem*{rmk*}{Remark}
\newtheorem*{OpenProblem}{Open Problem}
\newcommand{\Z}{\mathbb{Z}}
\newcommand{\HH}{\mathbb{H}}
\newcommand{\R}{\mathbb{R}}
\newcommand{\C}{\mathbb{C}}
\newcommand{\GL}{\operatorname{GL}}
\newcommand{\SL}{\operatorname{SL}}
\newcommand{\SO}{\operatorname{SO}}
\newcommand{\colvector}[2]{\left(\begin{smallmatrix} #1 \\ #2 \end{smallmatrix}\right)}
\newcommand{\ColVector}[2]{\left(\begin{array}{c} #1 \\ #2 \end{array}\right)}
\DeclareMathOperator{\sgn}{sgn}
\let\@@pmod\pmod
\DeclareRobustCommand{\pmod}{\@ifstar\@pmods\@@pmod}
\def\@pmods#1{\mkern4mu({\operator@font mod}\mkern 6mu#1)}
\numberwithin{equation}{section}
\begin{document}

\title{On a relation between certain $q$-hypergeometric series and Maass waveforms}
\author{Matthew Krauel, Larry Rolen, and Michael Woodbury}
\address{Mathematical Institute\\University of
Cologne\\ Weyertal 86-90 \\ 50931 Cologne \\Germany}
\email{mkrauel@math.uni-koeln.de} 
\email{woodbury@math.uni-koeln.de}
\address{212 McAllister Building \\
The Pennsylvania State University\\
University Park, PA 16802}
\email{larryrolen@psu.edu}

\date{\today}
\thanks{2010 Mathematics Subject Classification: 11F03, 11F27.\\
\indent The first author is supported by the European Research Council (ERC) Grant agreement n. 335220 - AQSER. The second author thanks the University of Cologne and the DFG for their generous support via the University of Cologne postdoc grant DFG Grant D-72133-G-403-151001011, funded under the Institutional Strategy of the University of Cologne within the German Excellence Initiative.
}

\maketitle

\begin{abstract}
In this paper, we answer a question of Li, Ngo, and Rhoades concerning a set of $q$-series related to the $q$-hypergeometric series $\sigma$ from Ramanajun's lost notebook. Our results parallel a theorem of Cohen which says that $\sigma$, along with its partner function $\sigma^*$, encode the coefficients of a Maass waveform of eigenvalue $1/4$. 
\end{abstract}

\section{Introduction}

The function 
\[
\sigma(q):=\sum_{n\geq0}\frac{q^{\frac{n(n+1)}2}}{(-q)_n}
,
\]
where $(a;q)_n=(a)_n:=\prod_{j=0}^{n-1}\left(1-aq^j\right)$ and $|q|<1$, was first considered in Ramanujan's 
``Lost'' notebook \cite{AndrewsLostNotebookV}. Andrews, Dyson, and Hickerson showed \cite{AndrewsDysonHickerson} that this series satisfies several striking and beautiful properties, and in particular that if 
\[
\sigma(q)=:\sum_{n\geq0}S(n)q^n
,
\]
then $\lim \sup |S(n)|=\infty$ but $S(n)=0$ for infinitely many $n$. Their proof was closely related to showing that $\sigma$ can be written as the indefinite theta series
\[
\sigma(q)=\sum_{\substack{n\geq0\\ |j|\leq n}}(-1)^{n+j}q^{\frac{n(3n+1)}2-j^2}\left(1-q^{2n+1}\right)
.
\]
Cohen \cite{Cohen} then introduced the complementary function 
\[
\sigma^*(q):=2\sum_{n\geq1}\frac{(-1)^nq^{n^2}}{(q;q^2)_n}
\]
and used $\sigma$ and $\sigma^*$ to nicely package the work of Andrews, Dyson, and Hickerson within a single modular object. 
Namely, he showed that if we define coefficients $\{T(n)\}_{n\in24\Z+1}$ by 
\[
q\sigma\left(q^{24}\right)
=:\sum_{n\geq0}T(n)q^n
,
\]
\[
q^{-1}\sigma^*\left(q^{24}\right)=:\sum_{n<0}T(n)q^{-n}
,
\]
then $T(n)$ are the Fourier coefficients of a Maass waveform. To describe this, set $\tau = {x}+i{y} \in\mathbb{H}$ and $e(w):=e^{2\pi i w}$ for a variable $w$.  Then Cohen proved that for $K_0$, a modified Bessel function of the second kind, the function 
\begin{equation}\label{eq:Cohensu}
u(\tau)
:=
y^{\frac{1}{2}}\sum_{n\in24\Z+1}T(n)K_0\left(\frac{2\pi |n|{y}}{24}\right)e\left(\frac {nx}{24}\right)
\end{equation}
is a Maass waveform on the congruence subgroup $\Gamma_0(2)$. That is, $u$ satisfies the transformations
\begin{align*}
u\left(-\frac1{2\tau}\right)
&=\overline{u(\tau)}
, \\
u(\tau+1)&=e\left(\frac1{24}\right)u(\tau)
,
\end{align*}
and is an eigenfunction of the hyperbolic Laplacian 
\[
\Delta:=-{y}^2\left(\frac{\partial^2}{\partial {x}^2}+\frac{\partial^2}{\partial {y}^2}\right)
\]
with eigenvalue $1/4$.

Subsequently, further examples of related series were considered by a number of authors.  For example, Corson, Favero, Liesinger, and Zubairy \cite{CFLZ-Characters} defined the pair of $q$-series
\begin{align*}
W_1(q):&=\sum_{n\geq0}\frac{(q)_n(-1)^nq^{\frac{n(n+1)}2}}{(-q)_n}
,
\\
W_2(q):&=\sum_{n\geq1}\frac{(-1;q^2)_n(-1)^nq^n}{(q;q^2)_n}
,
\end{align*}
and established many identities for them.  These $q$-series identities inspired Bringmann and Kane \cite{BK-Multiplicative} and Lovejoy \cite{Lovejoy} to study further examples of related $q$-series.

Meanwhile, Li, Ngo, and Rhoades \cite{LNR-Renorm2}, in the context of Maass waveforms and in relation to Zagier's theory of quantum modular forms \cite{Zagier}, studied the functions $W_1(q)$ and $W_2(q)$.  In particular, using the identities of \cite{CFLZ-Characters}, they showed that $W_1(q)$ and $W_2(q)$ encode the coefficients of a Maass waveform of eigenvalue $1/4$ in a similar manner to that of $\sigma$ and $\sigma^*$ as described above.

Noting similarities between the $q$-series studied in \cite{BK-Multiplicative} and \cite{Lovejoy} with $\sigma$, $\sigma^*$ and $W_1$, $W_2$ surrounding connections to the arithmetic of real quadratic fields, Li, Ngo, and Rhoades asked if the series
\begin{align*}
f_1(q)&:=\sum_{n\geq0}\frac{q^{\frac{n(n+1)}2}}{(-q)_n\left(1-q^{2n+1}\right)}, &f_2(q)&:=\sum_{n\geq1}\frac{q^{\frac{n(n+1)}2}}{(-q)_{n-1}\left(1-q^{2n-1}\right)},\\
f_3(q)&:=\sum_{n\geq0}\frac{(q)_{2n}q^n}{(-q)_{2n+1}}, 
&f_4(q)&:=\sum_{n\geq0}\frac{(q)_{2n+1}q^{n+1}}{(-q)_{2n+2}},\\
f_5(q)&:=\sum_{n\geq0}\frac{(-1)^n(q)_nq^{\frac{n(n+1)}2}}{(q;q^2)_{n+1}}, 
&f_6(q)&:=\sum_{n\geq1}\frac{(-1)^n\left(q^2;q^2\right)_{n-1}q^n}{(q^n;q)_n},\\
f_7(q)&:=\sum_{n\geq0}\frac{(-1)^n\left(q^2;q^2\right)_{n}q^{n^2+n}}{(-q)_{2n+1}}, 
&f_8(q)&:=\sum_{n\geq1}\frac{(q)_{n-1}q^n}{\left(-q^n\right)_n},\\
LL(q)&:=\sum_{n\geq1}\frac{(-1)^n(q)_{n-1}q^{\frac{n(n+1)}2}}{(-q)_n},\qquad \text{ and}
&L(q)&:=\sum_{n\geq1}\frac{\left(q^2;q^2\right)_{n-1}q^n}{\left(-q^2;q^2\right)_n}
\end{align*}
can also be described in terms of Maass wave forms.

\begin{OpenProblem}[Li, Ngo, Rhoades, \cite{LNR-Renorm2}]
Relate the series $f_1, f_2,\ldots, f_8, L, LL$ to indefinite theta functions. 
\end{OpenProblem}

The authors state that these examples ``do not seem to fit as nicely into the theory of Maass waveforms.'' They explain that the difficulty of showing their relation to Maass waveforms stems from not knowing whether they are associated to Hecke characters, information available in the cases of $\sigma, \sigma^*$ and $W_1,W_2$. As a result, it is unclear if Cohen's method of proof applies to these examples. In this paper, we resolve Li, Ngo, and Rhoades' open problem and circumvent this obstruction.

In order to state our result for any Maass waveform $f(\tau)$ of eigenvalue $1/4$ on a congruence subgroup, say with 
Fourier expansion
\[
f(\tau)
=:
y^{\frac{1}{2}}\sum_{n\neq0}A(n)K_{0}\left(2\pi |n|{y}\right)e(n {x})
,
\]
we define the $q$-series associated to its positive and negative coefficients by 
\[
f^{\pm}(\tau):=\sum_{\sgn(n)=\pm1}A(n)q^n
.
\]
Note that this formal $q$-series will, with appropriate growth conditions on $A(n)$, converge on the half-plane $\{\tau\in\C : \sgn(\operatorname{Im}(\tau))=\pm1\}$.
We remark in passing that such a map from Maass forms to $q$-series was studied extensively by 
Lewis and Zagier \cite{LewisZagier1,LewisZagier2}, and used by Zagier \cite{Zagier} to show that 
$\sigma,\sigma^*$ give rise to a quantum modular form. Our main result is the following theorem. 

\begin{theorem}\label{mainthm}
Let $g$ be any of the series $f_1,f_2,\ldots,f_8,L,LL$. Then there is a component $F_g$ of a vector-valued Maass waveform on $\operatorname{SL}_2(\Z)$ of eigenvalue $1/4$ such that $F_g^+ = C + q^\alpha g$ for constants $\alpha$ and $C$ given in the following Table \ref{tableThm}.
\begin{table}[!h]
\begin{center}
\addtolength{\tabcolsep}{1mm}
\renewcommand{\arraystretch}{1.2}
\begin{tabular}{||c||c|c|c|c|c|c|c|c|c|c|}
\hline
$g$ & $f_1$ & $f_2$ & $f_3$  & $f_4$ & $f_5$ & $f_6$ & $f_7$ & $f_8$ & $LL$ & $L$  \\
\hline
$\alpha$   & $\frac{1}{16} $ & $-\frac{7}{16}$  & $\frac{1}{2}$ & $0$ & $\frac{1}{4}$ & $-\frac{1}{4}$ & $\frac{1}{3}$ & $-\frac{1}{3}$ & $0$ & $0$ \\
\hline
$C$ & $0$ & $0$ & $0$  & $-\frac{1}{4}$ & $0$ & $0$ & $0$ & $0$ & $-\frac{1}{4}$ & $-\frac{1}{4}$  \\
\hline
\end{tabular}\\[1mm]
\caption{Constants relating $F_g^+$ with $g\in \{f_1 ,\dots ,f_8, LL, L\}$.}
\label{tableThm}
\end{center}
\end{table}
\end{theorem}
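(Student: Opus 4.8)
The plan is to solve the problem in the spirit of its formulation: first realize each series $g$ as an indefinite binary theta function, and then read off the Maass-waveform property directly from the modular transformation of that theta function, never passing through a Hecke character. The substantive first step is to establish, for each of the ten series, a double-sum identity of the shape
\[
q^{\alpha} g + C \;=\; \sum_{(m,n)\in\Lambda} \chi(m,n)\,\varepsilon(m,n)\, q^{Q(m,n)},
\]
where $Q$ is an indefinite quadratic form of signature $(1,1)$, $\Lambda$ is a (possibly shifted) rank-two lattice, $\chi$ a finite-order character, and $\varepsilon(m,n)$ a $\pm1$-valued sign condition that both ensures convergence and makes the exponents bounded below. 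Such an identity is exactly the analogue, for our list, of the Andrews--Dyson--Hickerson representation of $\sigma$ recalled in the introduction. I would produce these identities by transforming the defining $q$-hypergeometric sums with Bailey-pair techniques, importing the identities already available in \cite{CFLZ-Characters}, \cite{BK-Multiplicative}, and \cite{Lovejoy} for the series closest to $W_1,W_2$, and deriving new transformations for the remaining ones.

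With $g$ written as an indefinite theta function, its coefficients $A(n)$ become explicit signed lattice-point counts, and the generating series attached to negative exponents is a second, complementary indefinite theta function --- the role played by $\sigma^{*}$ for $\sigma$. To build the Maass form I would deliberately avoid identifying the underlying Hecke character, which is the datum Li, Ngo, and Rhoades note is unavailable. Instead I would invoke the general modularity of indefinite theta functions of signature $(1,1)$: after adjoining the non-holomorphic completion term in the manner of Vignéras and Zwegers, the vector of theta functions transforms modularly purely by virtue of the lattice and sign data. This transformation is intrinsic and needs no arithmetic input about real quadratic fields, and it is precisely what circumvents the stated obstruction. Passing from the modularity of the completed theta function to the Maass-waveform property of $y^{1/2}\sum_n A(n) K_0(2\pi|n|y)e(nx)$ --- whose every Fourier term is already a $\Delta$-eigenfunction of eigenvalue $1/4$ --- is then a matter of matching Mellin transforms, equivalently of deducing the functional equation of $\sum_n A(n) n^{-s}$ from the theta transformation; this is the route already implicit in the work of Lewis and Zagier \cite{LewisZagier1,LewisZagier2}.

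To present the result uniformly on $\SLZ$ I would assemble the individual theta functions, which transform on congruence subgroups $\Gamma_0(N)$ with explicit multiplier systems, into a single vector governed by the associated Weil representation; each $F_g$ then appears as one component of one vector-valued Maass waveform on $\SLZ$ of eigenvalue $1/4$. The shifts $\alpha$ in Table~\ref{tableThm}, whose denominators are $16,2,4,3$, and $1$, record the arithmetic progression supporting each component, i.e.\ the width of the relevant cusp, while the constants $C$ come from the $n=0$ contribution of the corresponding theta sum and so vanish except for $f_4, LL$, and $L$. Reading off these boundary data gives the exact entries of the table.

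The main obstacle is the first step. For $\sigma$ the single indefinite-theta identity was already a delicate and celebrated result, and here one must find ten analogues; while several should follow from the literature cited above, the rest will require genuinely new Bailey-pair manipulations. A secondary difficulty is the bookkeeping of levels and multiplier systems: one must verify that the ten scalar theta functions fit together into a \emph{single} $\SLZ$-representation rather than merely being modular on separate congruence groups, and that the induced Fourier data reproduce the precise constants $\alpha$ and $C$ rather than close approximations to them.
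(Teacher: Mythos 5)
Your skeleton matches the paper's---realize each series as an indefinite theta function of signature $(1,1)$ and invoke Zwegers-style completion---but there is a genuine gap at the crux. You assert that ``after adjoining the non-holomorphic completion term \dots\ the vector of theta functions transforms modularly purely by virtue of the lattice and sign data,'' and then propose to transfer modularity to the Bessel series $y^{1/2}\sum_n A(n)K_0(2\pi|n|y)e(nx)$ by a Mellin-transform/converse argument. What is automatically modular is only the completed function $\widehat{\Phi}_{a,b}=\Phi_{a,b}+\varphi^{c_1}_{a,b}-\varphi^{c_2}_{a,b}$; the correction terms $\varphi^{c_j}_{a,b}$ are \emph{not} eigenfunctions of $\Delta$, so generically $\Phi_{a,b}$ is only a \emph{mock} Maass form, and the functional equation you would extract for $\sum_n A(n)n^{-s}$ acquires extra terms coming from the $\varphi$'s. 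Since the theorem asserts an honest Maass waveform, you must prove that the completion terms \emph{vanish}, and your proposal contains no mechanism for this. That vanishing is precisely the step that replaces the unavailable Hecke character: the paper chooses $c_1,c_2$ in the same orbit of $\SO^+(Q,\Z)$, so that for a suitable automorph $\gamma$ (e.g.\ $\left(\begin{smallmatrix}3&2\\4&3\end{smallmatrix}\right)$, essentially the fundamental-unit automorph of $Q$---this is where the real-quadratic arithmetic actually enters) one has $\gamma c_1=c_2$ and $\gamma a_\ell\equiv \pm a_{\ell'}\pmod{\Z^2}$; Lemma~\ref{Zlem} then gives $\varphi^{c_1}_{a_\ell,b}=\varphi^{c_2}_{a_{\ell'},b}$, and the completion terms cancel, sometimes only after summing over a \emph{pair} of shifted lattices $a_1,a_2$ (which is why several of the series must be represented by two cosets rather than one). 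Without this, the argument stalls at a mock object and the theorem is not reached.

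You also misjudge where the labor lies. All ten indefinite-theta representations already exist in the literature (Propositions 3.1--3.9 of \cite{BK-Multiplicative} for $f_1,\dots,f_8$; \cite{Lovejoy} and \cite{LNR-Renorm2} for $LL$ and $L$), so no new Bailey-pair machinery is needed; the real work is massaging these into Zwegers' normal form: doubling each one-sided sum via substitutions like $n\mapsto -n-1$ so the summation region becomes exactly the support of $\rho_B$, tracking the weight $\rho_B=\tfrac12$ on boundary lines (whose unary theta contributions must cancel in pairs, and whose $r=0$ term produces the constants $C$---your reading of $C$ as the zero-lattice-point contribution, nonzero exactly when $a\in\Z^2$, i.e.\ for $f_4$, $LL$, $L$, is correct), and choosing $a,b$ compatible simultaneously with the sign characters and with the vanishing criterion above. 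Finally, once vanishing is in hand your Mellin/converse detour is superfluous: $\Phi_{a,b}=\widehat{\Phi}_{a,b}$ is an eigenfunction term-by-term, and Theorem~\ref{Zthm} shows the finite span of the $\widehat{\Phi}_{a+p_1,b+p_2}$ with $p_i\in A^{-1}\Z^2$ is closed under $\tau\mapsto\tau+1$ (using $a+\tfrac12A^{-1}A^*\in A^{-1}\Z^2$) and $\tau\mapsto-1/\tau$, which directly yields the vector-valued Maass waveform on $\SL_2(\Z)$.
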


\pagebreak

\begin{remarks} 
We make the following comments.

\begin{enumerate}
\item We recall that Cohen showed that $\sigma^*$ corresponds to the negative coefficients of the Maass waveform attached to $\sigma$, and that $\sigma (q^{-1}) =-\sigma^* (q)$ for roots of unity $q$. Based on similar relations, (see \cite{LNR-Renorm2}) $f_3 (q^{-1}) =f_3(q)$, $f_4(q^{-1})=-f_4(q)$, $f_5(q^{-1}) =-f_6(q)$, $f_7(q^{-1}) =-f_8(q)$, and $L(q^{-1})=LL(q)$, Li, Ngo, and Rhoades suggest that it is natural to expect that the map $q\mapsto q^{-1}$ is intimately related with the negative coefficients in general. Indeed, after checking with a computer we suspect it can be shown that $F_{f_3}^- (q^{-1}) = -F_{f_3}^+ (q)$, $F_{f_4}^- (q^{-1}) = F_{f_4}^+ (q)$, $F_{f_5}^- (q^{-1}) = F_{f_6}^+ (q)$, $F_{f_6}^- (q^{-1}) = F_{f_5}^+ (q)$, and $F_{f_7}^- (q^{-1}) = e(-1/3)F_{f_8}^+ (q)$, while other identities presumably exist as well. The general phenomenon behind these relations remains a mysterious direction meriting further investigation.
 \item
 Using the methods discussed in \cite{LNR-Renorm2}, it is possible to show that the functions in Theorem \ref{mainthm} are quantum modular forms on the sets where the $q$-hypergeometric series terminate. It would be interesting to pursue this further, and to find the maximal possible ``quantum sets'' (which, as mentioned in \cite{LNR-Renorm2}, is related to the vanishing of the corresponding Maass waveform at cusps).
 \item
Another intriguing direction would be to identify the other components of the vector-valued Maass waveforms arising in Theorem \ref{mainthm} as $q$-hypergeometric series.
 \end{enumerate}
\end{remarks}

The paper is organized as follows. In Section \ref{PrelimSection}, we review the necessary preliminaries, and in particular the work of Zwegers \cite{Zwegers-Maass} which allows us to study the functions in Theorem \ref{mainthm}.   In Section \ref{ProofsSection}, we prove Theorem \ref{mainthm} by first rewriting these $q$-series in terms of indefinite theta functions and then applying Zwegers' theory discussed in Section \ref{PrelimSection}.

\section*{Acknowledgements}

The first and third authors would like to thank Kathrin Bringmann
for suggesting the work on this problem as part of an ERC project,
and the authors thank her also for many enlightening conversations. The authors also thank the referee for many helpful comments.

\section{Preliminaries}\label{PrelimSection}

In this section we summarize the relevant work of Zwegers \cite{Zwegers-Maass} and set notation. Suppose $A$ is a symmetric $2\times 2$ matrix with integral coefficients such that the quadratic form 
\begin{equation}\label{eq:Q}
 Q({\nu}) := \frac12 {\nu}^T A {\nu} 
\end{equation}
is indefinite, where ${\nu}^T$ denotes the transpose of ${\nu}$.  Let $B({\nu},\mu)$ be the associated bilinear form given by
\begin{equation}\label{eq:B}
 B({\nu},\mu) := {\nu}^T A \mu = Q({\nu}+\mu)-Q({\nu})-Q(\mu),
\end{equation}
and $c_1,c_2\in \R^2$ be vectors such that $Q(c_j)=-1$ for $j=1,2$ and $B(c_1,c_2)<0$.  In particular, this implies that $c_1$ and $c_2$ belong to the same component of the set $C_Q:=\{ c\in \R^2\mid Q(c)=-1\}$.  

Let us choose $c_0\in C_Q$ and set
 \[ C_Q^+ := \{ c\in C_Q\mid B(c,c_0)<0\} \qquad\mbox{and}\qquad
    C_Q^- := \{ c\in C_Q\mid B(c,c_0)>0\}. \]
Note that these are the two components of $C_Q$.  Moreover, there exists a unique $P\in \GL_2(\R)$ such that
 \[ A = P^T \left(\begin{array}{cc} 0 & 1 \\ 1 & 0 \end{array}\right) P \]
and $P^{-1}\colvector{1}{-1}= c_0$.  With this $P$ in place, for each $c\in C_Q^+$ there is a unique $t \in \mathbb{R}$ such that
 \begin{equation}\label{eq:c(t)}
 c = c(t):= P^{-1}\left( \begin{array}{c} e^t \\ -e^{-t} \end{array} \right). 
 \end{equation}
In other words, we have an explicit parametrization of $C_Q^+$.  Additionally, given $c\in C_Q^+$ we let $c^\perp =c^\perp(t):=P^{-1}\colvector{e^t}{e^{-t}}$.  Note that $B(c,c^\perp)=0$, and $Q(c^\perp)=1$.  It is easily seen that these two conditions determine $c^\perp$ up to sign.

Set 
\begin{align*}
\rho_B({r})=\rho_{B}^{c_1,c_2}({r}):=&\frac{1}{2} \left[1-\sgn \left(B\left({r},c_1\right)B\left({r},c_2\right)\right) \right] \hspace{5mm} \text{and} \\ 
\rho_B^\perp ({r})=\rho_{B}^{c_1^\perp,c_2^\perp} ({r}):=&\frac{1}{2}\left[1-\sgn\left(B\left({r},c_1^\perp\right)B\left({r},c_2^\perp\right)\right)\right].
\end{align*}
 Given $c_j=c(t_j)\in C_Q^+$ for $j=1,2$, Zwegers defined the function 
 \begin{equation}\label{ZwegersPhiDefn}
\begin{aligned}
 \Phi_{a,b}(\tau)=\Phi_{a,b}^{c_1,c_2}(\tau)  :&= \sgn(t_2-t_1) {y}^{\frac{1}{2}} \sum_{{r} \in a+\Z^2} \rho_B({r}) e( Q({r}){x}+ B({r},b))K_0(2\pi Q({r}){y}) \\
 & \quad + \sgn(t_2-t_1) {y}^{\frac{1}{2}} \sum_{{r} \in a+\Z^2} \rho_B^\perp ({r}) e( Q({r}){x}+B({r},b))K_0(-2\pi Q({r}){y})
\end{aligned}
\end{equation}
where again $\tau={x}+i{y}$ is in the upper half-plane $\HH$, $q=e(\tau)$, and $K_0$ is the Bessel function as in the introduction and which satisfies $(x\frac{\partial ^2}{\partial {x}^2}+\frac{\partial}{\partial {x}}-{x})K_0({x})=0$. 

As indicated in \cite{Zwegers-Maass}, it is not difficult to see that for particular choices of the parameters $Q$, $a$, $b$, $c_1$ and $c_2$, $\Phi_{a,b}(\tau)$ is Cohen's Maass waveform \eqref{eq:Cohensu}.  In general, assuming convergence, it is immediate from the differential equation satisfied by $K_0$ that for an arbitrary choice of parameters, $\Phi_{a,b}(\tau)$ is an eigenvector of the Laplace operator $\Delta$ with eigenvalue $1/4$.  Zwegers found a certain completion of $\Phi_{a,b}(\tau)$ which, as described below, transforms like a modular form.  Moreover, conditions are given under which it is shown to be true that $\Phi_{a,b}$ is equal to its completion.  To describe this, we first consider the series
 \[ 
 \varphi_{a,b}^c(\tau) := {y}^{\frac{1}{2}}\sum_{{r}\in a+\Z^2} \alpha_{t} \left({r} {y}^{\frac{1}{2}} \right) q^{Q({r})}e(B({r},b))
 , 
 \]
with 
 \[ 
 \alpha_{t}({r}):= 
\begin{cases} \displaystyle{\int}_{t}^\infty e^{-\pi B({r},c(u))^2}du & \mbox{ if }B\left({r},c\right)B\left({r},c^\perp\right)>0, \\[2ex]
 -\displaystyle{\int}_{-\infty}^{t} e^{-\pi B({r},c(u))^2}du & \mbox{ if }B\left({r},c\right)B\left({r},c^\perp \right)<0, \\
 0 & \mbox{ otherwise. }
 \end{cases}
%
 \]
where $t$ satisfies \eqref{eq:c(t)}. These functions satisfy the following transformation properties.
\begin{lemma}[Zwegers \cite{Zwegers-Maass}] \label{Zlem}
For $c\in C_Q^+$ and $a,b\in\R^2$, let $\varphi_{a,b}^c$ be defined as above.  Then
\[  \varphi_{a+\lambda,b+\mu}^c = e(B(a,\mu))\varphi_{a,b}^c\quad\mbox{for all $\lambda\in \Z^2$ and }\mu\in A^{-1}\Z^2, \]
\[  \varphi_{-a,-b}^c = \varphi_{a,b}^c, \]
and
\[  \varphi_{\gamma a,\gamma b}^{\gamma c} = \varphi_{a,b}^c \quad \mbox{for all }\gamma\in \SO^+(Q,\Z). \]
Here 
 \[ \SO^+(Q,\Z):=\left\{\gamma \in \SL_2(\Z)\mid Q(\gamma {r})=Q({r})\mbox{ for all }{r}\in \R^2, \gamma(C_Q^+)=C_Q^+\right\}. \]
\end{lemma}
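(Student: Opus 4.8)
The plan is to verify each of the three identities by direct manipulation of the series defining $\varphi_{a,b}^c$, exploiting in each case a bijection of the summation set $a+\Z^2$ together with invariance properties of $Q$, $B$, and the coefficient function $\alpha_t$.

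For the first identity I would argue as follows. Replacing $a$ by $a+\lambda$ with $\lambda\in\Z^2$ leaves the coset $a+\Z^2$ unchanged, so only the shift $b\mapsto b+\mu$ matters. Writing $\mu=A^{-1}m$ with $m\in\Z^2$, the extra factor is $e(B({r},\mu))=e({r}^T m)$; since ${r}-a\in\Z^2$ and $m\in\Z^2$ we have ${r}^T m\equiv a^T m \pmod{1}$, so $e(B({r},\mu))=e(B(a,\mu))$ is independent of ${r}$ and factors out of the sum. The second identity follows from the substitution ${r}\mapsto -{r}$, which sends $-a+\Z^2$ to $a+\Z^2$, while $Q(-{r})=Q({r})$, $B(-{r},-b)=B({r},b)$, and---because the integrand $e^{-\pi B({r},c(u))^2}$ and the sign condition $B({r},c)B({r},c^\perp)>0$ are both even in ${r}$---also $\alpha_t(-{r})=\alpha_t({r})$.

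The third identity is the substantial one, and here I would substitute ${r}=\gamma s$. Since $\gamma\in\SL_2(\Z)$ permutes $\Z^2$, this is a bijection of $\gamma a+\Z^2$ onto itself with $s$ ranging over $a+\Z^2$. The relation $\gamma^T A\gamma=A$ (equivalent to $\gamma\in\SO^+(Q,\Z)$) immediately gives $Q(\gamma s)=Q(s)$ and $B(\gamma s,\gamma b)=B(s,b)$, and rewriting $\gamma^T A=A\gamma^{-1}$ yields $B(\gamma s,c(u))=B(s,\gamma^{-1}c(u))$. It remains to handle the coefficient $\alpha$, which in $\varphi^{\gamma c}_{\gamma a,\gamma b}$ is computed with respect to $\gamma c$. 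The crux is the claim that $\SO^+(Q,\R)$ acts on the parametrization \eqref{eq:c(t)} by translation: there is $s_0=s_0(\gamma)\in\R$ with $\gamma c(u)=c(u+s_0)$ and $\gamma c^\perp(u)=c^\perp(u+s_0)$. I would prove this by conjugating by $P$: since $A=P^T\left(\begin{smallmatrix}0&1\\1&0\end{smallmatrix}\right)P$, the matrix $\tilde\gamma:=P\gamma P^{-1}$ preserves the split form $\left(\begin{smallmatrix}0&1\\1&0\end{smallmatrix}\right)$, and the identity component of that orthogonal group is the diagonal torus $\operatorname{diag}(e^{s_0},e^{-s_0})$; applying this to $\colvector{e^u}{-e^{-u}}$ and $\colvector{e^u}{e^{-u}}$ gives the stated translations. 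Granting this, if $c=c(t)$ then $\gamma c=c(t+s_0)$, so the integration endpoint in $\alpha$ becomes $t+s_0$; combining $B(\gamma s\,{y}^{1/2},c(u))=B(s\,{y}^{1/2},c(u-s_0))$ with the change of variables $u\mapsto u-s_0$ transforms the integral, and by the same computation the governing sign condition $B(\cdot,\gamma c)B(\cdot,(\gamma c)^\perp)$, back to the one defining $\alpha_t^c(s\,{y}^{1/2})$.

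The main obstacle I anticipate is precisely this last point: establishing that the arithmetic group acts as a translation on the real parameter $t$, and then checking that the three-way case split in the definition of $\alpha_t$ is preserved under the change of variables, rather than merely the value of the integrand. Once the translation action is in hand, the remaining bookkeeping is routine.
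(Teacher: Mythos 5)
The paper contains no proof of Lemma \ref{Zlem} to compare against: it is imported verbatim from Zwegers \cite{Zwegers-Maass}, so the only internal ``proof'' is the citation. Judged on its own, your argument is correct, and it is in substance the standard argument (Zwegers' own) for these invariances. The first two identities are handled exactly right: $B({r},\mu)={r}^Tm$ with $\mu=A^{-1}m$ differs from $a^Tm=B(a,\mu)$ by an integer on the coset $a+\Z^2$, and $\alpha_t$ is even in ${r}$ because both the integrand and the product $B({r},c)B({r},c^\perp)$ governing the case split are even. For the third identity, your translation claim is indeed the crux, and the conjugation argument works; the one step you should make explicit is why $\tilde\gamma=P\gamma P^{-1}$ lies in the \emph{identity component} of the orthogonal group of $\left(\begin{smallmatrix}0&1\\1&0\end{smallmatrix}\right)$: that group consists of $\operatorname{diag}(\lambda,\lambda^{-1})$ (determinant $1$) and $\left(\begin{smallmatrix}0&\lambda\\\lambda^{-1}&0\end{smallmatrix}\right)$ (determinant $-1$); the latter are excluded because $\det\gamma=1$, and $\lambda<0$ is excluded because $\operatorname{diag}(\lambda,\lambda^{-1})$ with $\lambda<0$ interchanges the two components of $C_Q$, contradicting $\gamma(C_Q^+)=C_Q^+$ in the definition of $\SO^+(Q,\Z)$. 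This forces $\tilde\gamma=\operatorname{diag}(e^{s_0},e^{-s_0})$ with $s_0\in\R$, giving $\gamma c(u)=c(u+s_0)$ and $\gamma c^\perp(u)=c^\perp(u+s_0)$ as you assert. A bonus of this formulation, worth stating, is that it yields $(\gamma c)^\perp=c^\perp(t+s_0)=\gamma(c^\perp)$ \emph{exactly}, not merely up to the sign ambiguity the paper mentions; consequently $B(\gamma s,\gamma c)\,B\bigl(\gamma s,(\gamma c)^\perp\bigr)=B(s,c)\,B(s,c^\perp)$ holds as an identity of real numbers, so all three branches in the definition of $\alpha_t$ (including the degenerate ``otherwise'' case) are preserved verbatim, and the change of variables $u\mapsto u+s_0$ then finishes the computation as you describe. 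With those one-line justifications added, your proof is complete and self-contained, which is arguably more than the paper itself provides.
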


Zwegers' main result (reformulated slightly for our purposes) is as follows.

\begin{theorem}[Zwegers \cite{Zwegers-Maass}] \label{Zthm}
The function $\Phi_{a,b}(\tau)$ is well defined (i.e.,~converges absolutely) for any choice of parameters $a,b$, and $Q$.  Moreover, the function
\begin{equation}\label{eq:Phihat}
 \widehat{\Phi}_{a,b} = \Phi_{a,b}+\varphi_{a,b}^{c_1}-\varphi_{a,b}^{c_2}
\end{equation}
also converges absolutely and satisfies 
 \[ \widehat{\Phi}_{a+\lambda,b+\mu} = e(B(a,\mu))\widehat{\Phi}_{a,b}\quad \mbox{for all $\lambda\in \Z^2$ and }\mu\in A^{-1}\Z^2,\]
 \[ \widehat{\Phi}_{-a,-b} = \widehat{\Phi}_{a,b},\]
and the modular relations
\begin{align*}
  \widehat{\Phi}_{a,b}(\tau+1) & = e\left(-Q(a)-\frac12 B\left(A^{-1}A^*,a\right)\right)\widehat{\Phi}_{a,a+b+\frac12 A^{-1}A^*}(\tau), \\
  \widehat{\Phi}_{a,b}\left(-\frac{1}{\tau}\right) & = \frac{e(B(a,b))}{\sqrt{-\det{A}}} \sum_{p\in A^{-1}\Z^2\pmod*{\Z^2}} \widehat{\Phi}_{-b+p,a}(\tau),
\end{align*}
where $A^*$ is the vector comprised of the diagonal entries of $A$.
\end{theorem}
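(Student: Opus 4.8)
The plan is to reduce the four assertions—absolute convergence, the two elliptic transformations, the reflection $a,b\mapsto-a,-b$, and the two modular relations—to Zwegers' analysis of the smoothed indefinite theta sum, treating them in turn, with the $S$-transformation $\tau\mapsto-1/\tau$ as the genuine analytic heart of the matter. (Indeed, this is a mild reformulation of Zwegers' theorem in \cite{Zwegers-Maass}, so in principle one may quote it; below I describe the underlying argument.)

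First I would establish absolute convergence. For $\Phi_{a,b}$ the weight $\rho_B({r})$ is supported where $B({r},c_1)$ and $B({r},c_2)$ have opposite signs, which confines ${r}$ to a wedge on which $Q({r})>0$ is bounded below away from $0$; combined with the exponential decay $K_0(2\pi Q({r}){y})\ll e^{-2\pi Q({r}){y}}$ this gives absolute convergence, and the $\rho_B^\perp$ piece is handled identically using $Q({r})<0$ together with $K_0(-2\pi Q({r}){y})$. For $\varphi_{a,b}^c$ the factor $\alpha_t({r}{y}^{1/2})$ is an incomplete Gaussian integral decaying like $e^{-\pi B({r},c)^2{y}}$ exactly in those directions where the Bessel terms fail to decay, so the contributions are complementary and $\widehat{\Phi}_{a,b}$ converges absolutely for all parameters.

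Next I would treat the elliptic transformations and the reflection, which are purely formal: shifting $a$ by $\lambda\in\Z^2$ re-indexes the lattice $a+\Z^2$, while shifting $b$ by $\mu\in A^{-1}\Z^2$ produces $e(B({r},\mu))=e(B(a,\mu))$ since $B(\lambda,\mu)\in\Z$; the reflection invariance follows because $Q$, $\rho_B$, $\rho_B^\perp$ are even. By Lemma~\ref{Zlem} the completion terms $\varphi_{a,b}^{c_1}-\varphi_{a,b}^{c_2}$ satisfy the same relations, so they pass to $\widehat{\Phi}_{a,b}$. The translation $\tau\mapsto\tau+1$ multiplies $e(Q({r}){x})$ by $e(Q({r}))$; writing ${r}=a+n$ with $n\in\Z^2$, expanding $Q({r})=Q(a)+B(a,n)+Q(n)$, and reducing $Q(n)\equiv\tfrac12 B(A^{-1}A^*,n)\pmod 1$ (the diagonal entries $A^*$ governing the half-integral part) collects into the phase $e\!\left(-Q(a)-\tfrac12 B(A^{-1}A^*,a)\right)$ and the shift $b\mapsto a+b+\tfrac12 A^{-1}A^*$, as claimed.

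The hard part is the $S$-transformation. The key structural point, which motivates the precise shape of the completion \eqref{eq:Phihat}, is that $\widehat{\Phi}_{a,b}$ can be rewritten as a single lattice sum
\[
\widehat{\Phi}_{a,b}(\tau)={y}^{\frac12}\sum_{{r}\in a+\Z^2}E({r};\tau)\,e(B({r},b)),
\]
in which the sharp cutoffs $\rho_B,\rho_B^\perp$ of $\Phi$ and the incomplete Gaussians $\alpha_t$ of $\varphi^{c_1},\varphi^{c_2}$ fuse into a \emph{smooth}, rapidly decaying kernel $E$ interpolating between the two null directions $c_1,c_2$. I would then apply Poisson summation over $a+\Z^2$: the Fourier transform of $E(\,\cdot\,;-1/\tau)$ relates to $E(\,\cdot\,;\tau)$ via the Gaussian/Bessel self-duality encoded in the $K_0$ differential equation, the dual lattice $A^{-1}\Z^2$ appears, and regrouping by cosets $p\in A^{-1}\Z^2\pmod*{\Z^2}$ yields the stated sum, with $(-\det A)^{-1/2}$ coming from the covolume and $e(B(a,b))$ from the linear term. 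I expect the main obstacle to be making the smoothing identity for $E$ and the interchange of summation rigorous—verifying that the completion \emph{exactly} repairs the failure of the naive theta sum to be modular. This delicate cancellation is the core of Zwegers' method; by contrast, the elliptic and translation properties are routine bookkeeping.
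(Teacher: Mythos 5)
As the remark immediately following the statement explains, the paper itself does not prove this theorem but quotes it from Zwegers (a combination of Theorems~2.4 and~2.6 of \cite{Zwegers-Maass}); your sketch is, in substance, a correct reconstruction of Zwegers' own argument, including the convergence analysis via wedge-confinement plus Bessel decay, the phase bookkeeping for the elliptic and translation laws, and the decisive step of handling $\tau\mapsto-1/\tau$ by viewing $\widehat{\Phi}_{a,b}$ as a single lattice sum against the smooth kernel $\int_{t_1}^{t_2}e^{-\pi y B(r,c(t))^2}\,dt$ and applying Poisson summation. The only organizational difference is one of direction: Zwegers \emph{defines} $\widehat{\Phi}_{a,b}$ by that smooth sum, so that modularity is immediate from the fixed-$t$ theta transformation, and then proves the splitting \eqref{eq:Phihat} --- your ``fusion'' identity --- via the evaluation $\int_{\R}e^{-\pi y B(r,c(t))^2}\,dt = e^{2\pi y Q(r)}K_0\left(2\pi\lvert Q(r)\rvert y\right)$ valid when $B(r,c)B(r,c^\perp)\neq 0$, whereas you start from the split side of \eqref{eq:Phihat} and would need to establish exactly this identity to produce your kernel $E$; the mathematical content is the same.
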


\begin{rmk*}
Strictly speaking, Zwegers' function $\widehat{\Phi}_{a,b}$ (the aforementioned completion of $\Phi_{a,b}$) is not defined as in \eqref{eq:Phihat}.  Rather, $\widehat{\Phi}_{a,b}$ is defined in a more direct way, and then the content of Theorem~\ref{Zthm} (which is a combination of Theorems~2.4 and 2.6 in \cite{Zwegers-Maass}) is that his completion agrees with the right hand side of \eqref{eq:Phihat}.
\end{rmk*}

Roughly speaking, for each of the examples in Theorem~\ref{mainthm} we will find parameters $Q$, $a$, $b$, $c_1$ and $c_2$ that realize each of the functions as the ``positive part'' of $\Phi_{a,b}$ (corresponding to the first sum on the right hand side of \eqref{ZwegersPhiDefn}).  This is done in Section~\ref{sec:proofa}. In Section~\ref{sec:proofb}, we use Theorem~\ref{Zthm} and Lemma~\ref{Zlem} to prove that $\Phi_{a,b}$ is modular, hence a Maass form.


\section{Proofs}\label{ProofsSection}

We first show how each of the series in Theorem \ref{mainthm} may be written in terms of an indefinite theta series of the shape studied by Zwegers.

\subsection{Representations of indefinite theta series}\label{sec:proofa}

Note that in each of the statements and proofs of the results in this section a matrix $A$ is given which corresponds to a quadratic form $Q=Q_A$ and a bilinear form $B=B_A$ as in \eqref{eq:Q} and \eqref{eq:B}, respectively.  We trust that $Q$ and $B$ are clear from context without specifically referring back to the matrix $A$. We also note that although each case will be worked out in detail, at the end of this section the reader may find a convenient table summarizing all of the choices of parameters made in this section.
\begin{lemma}\label{lem:f1}
Setting $A= \left(\begin{smallmatrix} 8&0 \\0&-4 \end{smallmatrix} \right)$, 
  \begin{equation}
  \notag
  a_1 =  \begin{pmatrix} \frac{5}{8} \\ \frac{1}{2} \end{pmatrix}, \hspace{3mm} a_2 =  \begin{pmatrix} \frac{1}{8} \\ 0 \end{pmatrix}, \hspace{3mm} b=\begin{pmatrix} 0 \\ 0 \end{pmatrix} ,\hspace{3mm} c_1 = \begin{pmatrix} -\frac{1}{2} \\ 1 \end{pmatrix}, \hspace{3mm}  \text{and} \hspace{3mm} c_2 = \begin{pmatrix} \frac{1}{2} \\ 1 \end{pmatrix},
  \end{equation}
we have 
  \begin{equation}\label{eq:f1info2}
  q^{\frac{1}{16}} f_1 (q) =  \sum_{\ell =1}^2  \sum_{{r} \in a_\ell +\mathbb{Z}^2} \rho_B({r})q^{Q({r})} e(B({r} ,b)).
  \end{equation}
\end{lemma}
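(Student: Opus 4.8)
The plan is to unwind Zwegers' notation on the right-hand side of \eqref{eq:f1info2}, rewrite it as an explicit indefinite binary theta series attached to the form $u^2-8v^2$, and then match that series to $f_1$ via a classical $q$-hypergeometric identity. First I would record what the weight $\rho_B$ and the phase $e(B(r,b))$ actually contribute for this choice of data. Since $b=\left(\begin{smallmatrix}0\\0\end{smallmatrix}\right)$, every phase $e(B(r,b))$ equals $1$. With $A=\left(\begin{smallmatrix}8&0\\0&-4\end{smallmatrix}\right)$ one computes $B(r,c_1)=-4(r_1+r_2)$ and $B(r,c_2)=4(r_1-r_2)$, so that
\[
\rho_B(r)=\tfrac12\left[1+\sgn\!\left(r_1^2-r_2^2\right)\right].
\]
Hence $\rho_B(r)$ equals $1$ when $|r_1|>|r_2|$ and $0$ when $|r_1|<|r_2|$. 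Because on $a_1+\Z^2$ one has $r_1\in\tfrac58+\Z$, $r_2\in\tfrac12+\Z$, and on $a_2+\Z^2$ one has $r_1\in\tfrac18+\Z$, $r_2\in\Z$, the coordinates can never satisfy $|r_1|=|r_2|$; thus there are no boundary terms and $\rho_B\in\{0,1\}$ throughout. The right-hand side therefore collapses to the lattice-point count $\sum_{\ell=1}^2\sum_{r\in a_\ell+\Z^2,\ |r_1|>|r_2|}q^{Q(r)}$ with $Q(r)=4r_1^2-2r_2^2$; note that $|r_1|>|r_2|$ forces $Q(r)>0$, so this is genuinely the positive part of $\Phi_{a,b}$ and has non-negative coefficients.

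Next I would substitute $u=8r_1$, $v=2r_2$, under which $Q(r)=\tfrac{1}{16}(u^2-8v^2)$ and $|r_1|>|r_2|$ becomes $|u|>4|v|$. The coset $a_2+\Z^2$ corresponds to $u\equiv1\pmod 8$, $v\equiv 0\pmod 2$, while $a_1+\Z^2$ corresponds to $u\equiv 5\pmod 8$, $v\equiv 1\pmod 2$; together these are precisely the pairs with $u\equiv 1\pmod 4$ and $v\equiv\tfrac{u-1}{4}\pmod 2$. Pulling the factor $q^{1/16}$ out of $q^{Q(r)}=q^{1/16}q^{(u^2-8v^2-1)/16}$, the assertion \eqref{eq:f1info2} reduces to the single $q$-series identity
\[
f_1(q)=\sum_{\substack{u\equiv1\,(4),\ v\equiv\frac{u-1}{4}\,(2)\\ |u|>4|v|}} q^{\frac{u^2-8v^2-1}{16}}.
\]
This reduction is purely bookkeeping (congruence and integrality checks), and I would confirm it by matching the first several Taylor coefficients of the two sides.

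The heart of the argument, and the step I expect to be the main obstacle, is establishing this last identity, i.e.\ producing the indefinite binary theta representation of the $q$-hypergeometric series $f_1$. The difficulty is exactly the one flagged in the introduction: the factor $(1-q^{2n+1})^{-1}$ is what separates $f_1$ from $\sigma$, and unlike the reciprocal Pochhammer $(-q)_n^{-1}$ it cannot be carried through the Andrews--Dyson--Hickerson expansion term by term. I would attack it by a Bailey-pair argument in the spirit of Lovejoy \cite{Lovejoy}: insert the Bailey pair underlying the theta representation of $\sigma$, expand $(1-q^{2n+1})^{-1}=\sum_{k\ge0}q^{k(2n+1)}$, and resum the resulting double series into a single indefinite theta series on $u^2-8v^2$, carefully tracking the summation region and the (here trivial) signs. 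Once the region and congruence conditions emerging from this computation are reconciled with those produced by $\rho_B$ in the first step, the lemma follows. I expect the remaining cases $f_2,\dots,f_8,L,LL$ to be handled by this same three-step template with different data $A,a_\ell,b,c_1,c_2$, the only genuinely new input in each case being the corresponding hypergeometric-to-theta identity.
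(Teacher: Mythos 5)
Your unwinding of the right-hand side is correct, and it is in substance exactly the paper's argument: with $b=\left(\begin{smallmatrix}0\\0\end{smallmatrix}\right)$ all phases are $1$; $B(r,c_1)=-4(r_1+r_2)$ and $B(r,c_2)=4(r_1-r_2)$ give $\rho_B(r)=\tfrac12\left[1+\sgn\left(r_1^2-r_2^2\right)\right]$, which is the per-coset computation \eqref{eq:lemf1c} of the paper written invariantly; and your observation that $|r_1|=|r_2|$ is impossible on $a_1+\Z^2$ and $a_2+\Z^2$ correctly explains why no boundary contributions (and hence no additive constants, in contrast to Lemmas \ref{lem:f4}, \ref{lem:LL}, \ref{lem:L}) arise here. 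Your $(u,v)=(8r_1,2r_2)$ change of variables is also sound, and the single-cone identity you reduce to is equivalent, after the substitutions $n\mapsto-n-2$ and $n\mapsto-n-1$ that the paper performs, to the two-sum form used there.

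The genuine gap is the step you yourself flag as the heart of the matter: the Hecke-type indefinite theta representation of $f_1$ is never proven in your proposal, only a strategy is sketched (a Bailey-pair argument combined with the geometric expansion of $(1-q^{2n+1})^{-1}$ and a resummation), and matching initial Taylor coefficients is a sanity check, not a proof. What you are missing is that this identity is already in the literature: it is precisely Proposition~3.1 of Bringmann--Kane \cite{BK-Multiplicative},
\[
f_1(q)=\sum_{-n-1\le j\le n} q^{4n^2+5n+1-2j^2-2j}\left(1+q^{6n+6}\right)+\sum_{-n\le j\le n} q^{4n^2+n-2j^2}\left(1+q^{6n+3}\right),
\]
which the paper quotes as \eqref{eq:BKf1} and takes as its starting point; given it, the entire proof of the lemma consists of your steps one and two (completing the square, sign flips of $n$, and matching the resulting regions with the support of $\rho_B$). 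As written, your argument replaces the one substantive input with an unexecuted plan whose details --- how the double series produced by expanding $(1-q^{2n+1})^{-1}$ resums into the two Hecke-type sums with the correct summation regions --- are nontrivial and not obviously the route Bringmann and Kane actually take. If you either cite their Proposition~3.1 or carry out a complete Bailey-pair proof of it, your reduction turns into a correct proof essentially identical to the paper's.
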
 
 
\begin{proof}
Proposition $3.1$ of \cite{BK-Multiplicative} states
 \begin{equation}\label{eq:BKf1}
 f_1 (q) = \sum_{-n-1 \leq j \leq n} q^{4n^2 +5n +1 -2j^2 -2j}\left( 1+q^{6n+6}\right) 
   + \sum_{-n \leq j \leq n} q^{4n^2 +n -2j^2}\left( 1+q^{6n+3}\right).
 \end{equation}
Focusing first on the left sum we note that
\begin{equation}\label{eq:lemf1a}
 4n^2 +5n +1 -2j^2 -2j = 4\Big(n+\frac{5}{8}\Big)^2 -2\Big(j+\frac{1}{2}\Big)^2 - \frac{1}{16}.
\end{equation}
Making the change of variables $n \mapsto -n-2$, \eqref{eq:lemf1a} becomes $4n^2 +11n +7 -2j^2 -2j$. This same change of variables, moreover, transforms the set over which the summation is applied via
 \[  \left\{ \ColVector{n}{j} \in \Z^2 \left\vert \begin{array}{c} -1 \leq n+j, \\ 0\leq n-j\end{array}\right. \right\} \mapsto 
     \left\{ \ColVector{n}{j} \in \Z^2\left\lvert \begin{array}{c}n-j < 0, \\  n+j < -1 \end{array}\right. \right\} .
\]
Therefore, 
 \begin{align}\label{eq:lemf1d}
 \sum_{-n-1 \leq j \leq n} q^{4n^2 +5n +1 -2j^2 -2j}\left( 1+q^{6n+6}\right) 
 &= q^{-\frac{1}{16}} \left ( \sum_{\substack{-1\leq n+j \\ 0\leq n-j}} + \sum_{\substack{n-j \leq -1\\ n+j\leq 0}}\right ) q^{4 \left(n+ \frac{5}{8} \right)^2 -2\left(j+ \frac{1}{2} \right)^2}.
 \end{align}
To find a $\Phi_{a_1,b}$ which coincides with this expression, it is clear that we must choose $a_1$ as in the statement of the Lemma.  Then for $r_1 = \colvector{n}{j}+a_1$, we see that
\begin{align}\label{eq:lemf1c}
  \rho_B\left({r}_1 \right) &=\frac{1}{2}\left[1+\sgn \left(\left(4(n+j)+\frac{9}{2}\right)\left(4(n-j)+\frac{1}{2}\right) \right) \right] \\ \nonumber
 &= \begin{cases} 1 &\text{if } n+j\geq -1 \text{ and } n-j\geq 0 \text{ or } n+j< -1 \text{ and } n-j< 0, \\ 0 & \text{otherwise.} \end{cases}
\end{align}
In other words, $\rho_B(\colvector{n}{j}+a_1)$ equals, as a function of $n$ and $j$, the characteristic function of the exact set over which the summation on the right side of \eqref{eq:lemf1d} is applied.

We can treat the right sum of \eqref{eq:BKf1} similarly.  In this case,
\begin{equation}\label{eq:lemf1b}
 4n^2 +n -2j^2  = 4\Big(n+\frac{1}{8}\Big)^2 -2j^2 -\frac{1}{16}.
\end{equation}
The transformation $n \mapsto -n-1$ maps \eqref{eq:lemf1b} to $4n^2 +7n +3 -2j^2$ and the set
 \[ \left\{ \left.\ColVector{n}{j}\in \Z^2\right\vert \begin{array}{c} 0 \leq n+j, \\ 0 \leq n-j \end{array} \right\} \mapsto \left\{\ColVector{n}{j}\in \Z^2 \left\vert \begin{array}{c} n+j<0, \\ n-j<0 \end{array}\right. \right\}.\]
It follows that
\begin{align}\label{eq:lemf1e}
 \sum_{-n \leq j \leq n} q^{4n^2 +n -2j^2}\left( 1+q^{6n+3}\right) 
 &= q^{-\frac{1}{16}} \left( \sum_{\substack{ 0\leq n+j \\ 0\leq n-j}} + \sum_{\substack{  n-j <0 \\ n+j < 0}}\right) q^{4 \left(n+ \frac{1}{8} \right)^2 -2j^2}.
 \end{align}
As occurred in \eqref{eq:lemf1c}, for $r_2=\colvector{n}{j}+a_2$, $\rho_B(r_2)$ (as a function of $n$ and $j$) is the characteristic function of precisely the set over which the summation on the right hand side of \eqref{eq:lemf1e} is applied.

Therefore, in order to get \eqref{eq:f1info2}, we simply add \eqref{eq:lemf1d} and \eqref{eq:lemf1e}.  With the choice of $b=\colvector{0}{0}$, since this would imply that $B(r_\ell,b)=0$ for both $\ell=1$ and $\ell=2$, this gives the desired result.
\end{proof}


\begin{lemma}\label{lem:f2}
Setting $A= \left(\begin{smallmatrix} 8&0 \\0&-4 \end{smallmatrix} \right)$, 
  \begin{equation}
  \notag
  a_1 =  \begin{pmatrix} \frac{3}{8} \\ 0 \end{pmatrix}, \hspace{3mm} a_2 =  \begin{pmatrix} \frac{1}{8} \\ \frac{1}{2} \end{pmatrix}, \hspace{3mm} b=\begin{pmatrix} 0 \\ 0 \end{pmatrix} ,\hspace{3mm} c_1 =  \begin{pmatrix} -\frac{1}{2} \\ 1 \end{pmatrix}, \hspace{3mm}  \text{and} \hspace{3mm} c_2 =  \begin{pmatrix} \frac{1}{2} \\ 1 \end{pmatrix},
  \end{equation}
we have 
  \begin{equation}\label{eq:f2info2}
  q^{-\frac{7}{16}} f_2 (q) =  \sum_{\ell =1}^2 \sum_{{r} \in a_\ell +\mathbb{Z}^2} \rho_B({r})q^{Q({r})} e(B({r} ,b)).
  \end{equation}
 \end{lemma}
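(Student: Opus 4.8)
The plan closely parallels the proof of Lemma~\ref{lem:f1}, exploiting that $f_2$ is assigned the \emph{same} matrix $A=\left(\begin{smallmatrix}8&0\\0&-4\end{smallmatrix}\right)$ and the \emph{same} cone vectors $c_1=\colvector{-1/2}{1}$, $c_2=\colvector{1/2}{1}$ as $f_1$. Hence $Q(\nu)=4\nu_1^2-2\nu_2^2$, and a one-line computation gives $B(r,c_1)=-4(r_1+r_2)$ and $B(r,c_2)=4(r_1-r_2)$, so that $\rho_B$ is literally the same function as in Lemma~\ref{lem:f1}, namely $\rho_B(r)=\tfrac12\bigl[1+\sgn\bigl((r_1+r_2)(r_1-r_2)\bigr)\bigr]$; only the cosets $a_\ell+\Z^2$ change. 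First I would record an indefinite theta representation of $f_2$ of the same shape as \eqref{eq:BKf1}, i.e.\ a sum of two pieces of the form $\sum q^{(\text{quadratic in }n,j)}\bigl(1+q^{(\text{linear in }n)}\bigr)$ over triangular index sets, either by quoting it from \cite{BK-Multiplicative} or by reproducing the $q$-series manipulations used there.

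Second, in each piece I would complete the square in the quadratic exponent to write it as $Q(r)+\tfrac{7}{16}$ with $r=\colvector{n}{j}+a_\ell$. This is what forces the shifts: the piece with exponent $4n^2+3n+1-2j^2$ requires $a_1=\colvector{3/8}{0}$, and the piece with exponent $4n^2+n-2j^2-2j$ requires $a_2=\colvector{1/8}{1/2}$, while the common leftover constant $\tfrac{7}{16}$ accounts precisely for the prefactor $q^{-7/16}$ in \eqref{eq:f2info2}.

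Third, for each piece I would apply a change of variables of the form $n\mapsto-n-k$ to fold the two summands of $1+q^{(\text{linear})}$ onto two opposite cones carrying the common exponent $Q(r)$, so that their sum becomes the indicator of the full support. For the $a_1$-piece one takes $k=1$ (yielding the factor $1+q^{2n+1}$), and a direct check shows that $\rho_B(\colvector{n}{j}+a_1)=\tfrac12[1+\sgn((n+j+\tfrac38)(n-j+\tfrac38))]$ is the characteristic function of $\{n+j\ge0,\,n-j\ge0\}\cup\{n+j\le-1,\,n-j\le-1\}$, exactly the folded index set. The companion statement for $a_2$ is that $\rho_B(\colvector{n}{j}+a_2)=\tfrac12[1+\sgn((n+j+\tfrac58)(n-j-\tfrac38))]$ cuts out $\{n+j\ge0,\,n-j\ge1\}\cup\{n+j\le-1,\,n-j\le0\}$. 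Since $b=\colvector{0}{0}$, every phase $e(B(r,b))$ is trivial, and adding the two folded sums gives \eqref{eq:f2info2}.

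The step I expect to require the most care is the fold for the $a_2$-piece. Because $a_2$ has a half-integer second coordinate, the relevant triangular region is the \emph{asymmetric} set $-n\le j\le n-1$ rather than $-n\le j\le n$, and the two cones of $\operatorname{supp}\rho_B$ are \emph{not} interchanged by any reflection preserving both $Q$ and the coset $a_2+\Z^2$; a naive fold matching them head-on would generate a negative extra power of $q$. The remedy is to perform the summation (and hence the reflection) over the component on which the linear factor is a genuine nonnegative power—concretely, to write the $a_2$-piece with a positive index as $\sum_{m\ge1}\sum_{-m\le j\le m-1}q^{4m^2-m-2j^2-2j}(1+q^{2m})$ and fold via $m\mapsto-m$—and then to reconcile this with the $Q(r)$-normalization $4n^2+n-2j^2-2j$ coming from the completion of the square, which differs from it by the relabeling $n=-m$ within the coset. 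Verifying that the resulting support agrees on the nose with $\operatorname{supp}\rho_B$, including the boundary cases governed by the offsets $\tfrac58$ and $-\tfrac38$ in $n\pm j$, is the crux of the argument.
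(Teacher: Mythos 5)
Your proposal is correct and follows essentially the same route as the paper's own proof: quote the Bringmann--Kane expansion of $f_2$, complete the square in each piece to extract the common constant $\tfrac{7}{16}$ and the shifts $a_1=\left(\begin{smallmatrix}3/8\\0\end{smallmatrix}\right)$, $a_2=\left(\begin{smallmatrix}1/8\\1/2\end{smallmatrix}\right)$, and fold via $n\mapsto-n-1$ (for the $a_1$-piece) and $n\mapsto-n$ (for the $a_2$-piece) so that $\rho_B$ becomes exactly the indicator of the folded index sets, with all phases trivial since $b=0$. Even the subtlety you flag for the $a_2$-piece is the one the paper itself remarks on: completing the square there naturally produces the entry $-\tfrac18$, and either sign of $\pm\tfrac18$ gives the same $\ell=2$ term (by evenness of $Q$ and $\rho_B$ together with $-a_2\equiv\left(\begin{smallmatrix}-1/8\\1/2\end{smallmatrix}\right)\pmod{\Z^2}$), which is resolved exactly as you do, by the relabeling $n=-m$ built into the fold.
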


\begin{proof}
Proposition~3.2 of \cite{BK-Multiplicative} states
 \begin{equation} \label{eq:BKf2}
 f_2 (q) = \sum_{\substack{n\geq 1 \\ -n \leq j \leq n-1}} q^{4n^2 -n -2j^2 -2j}\left( 1+q^{2n}\right) 
   + \sum_{\substack{n\geq 0 \\ -n \leq j \leq n}} q^{4n^2 +3n +1 -2j^2}\left( 1+q^{2n+1}\right).
 \end{equation}
 In the left sum we find 
 \begin{equation} \label{eq:f2eq1}
 4n^2 -n -2j^2 -2j = 4\left(n-\frac{1}{8}\right)^2 -2\left(j+\frac{1}{2}\right)^2 +\frac{7}{16},
 \end{equation}
  while under the change of variables $n \mapsto -n$, \eqref{eq:f2eq1} becomes $4n^2 +n -2j^2 -2j = 4(n+1/8)^2 -2(j+1/2)^2 +7/16$. We pause to note that instead of choosing $a_2$ as we have, we could have taken it to contain the entry $-1/8$ (instead of $+1/8$) corresponding to \eqref{eq:f2eq1}, as well as other values. We have chosen our $a_2$ here, and the $a$ terms throughout this paper, to have entries of smallest nonnegative values. However, either value of $-1/8$ or $+1/8$ in $a_2$ leads to the $\ell=2$ term in \eqref{eq:f2info2}. 

Meanwhile, in the right sum of \eqref{eq:BKf2} we have 
  \[
  4n^2 +3n +1 -2j^2  = 4\left(n+\frac{3}{8}\right)^2 -2j^2 +\frac{7}{16},
  \]
   and taking $n \mapsto -n-1$ produces $4n^2 +5n +2 -2j^2$. A similar analysis as in the proof of Lemma \ref{lem:f1} gives the $\ell=1$ term in \eqref{eq:f2info2}. 
\end{proof}


 Unlike the previous two functions, $f_3$ is comprised of one sum which must first be doubled in order to be written in the desired form.

\begin{lemma}\label{lem:f3}
Setting $A= \left(\begin{smallmatrix} 4&0 \\0&-2 \end{smallmatrix} \right)$,
  \begin{equation}
  \notag
  a =  \begin{pmatrix} \frac{1}{2} \\ 0 \end{pmatrix}, \hspace{5mm} b=\begin{pmatrix} 0 \\ \frac{1}{4} \end{pmatrix} ,\hspace{5mm}  c_1 = \frac{1}{\sqrt{2}} \begin{pmatrix} -1 \\ 2 \end{pmatrix},\hspace{3mm}  \text{and}\hspace{3mm} c_2 = \frac{1}{\sqrt{2}} \begin{pmatrix} 1 \\ 2 \end{pmatrix},
  \end{equation}
  we have
  \begin{equation}\label{eq:f3info2}
   q^{\frac{1}{2}} f_3 (q) = \frac{1}{2} \sum_{{r} \in a +\mathbb{Z}^2} \rho_B({r})q^{Q({r})} e(B({r} ,b)).
  \end{equation} 
 \end{lemma}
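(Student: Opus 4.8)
The plan is to mimic the strategy already executed for $f_1$ and $f_2$ in Lemmas~\ref{lem:f1} and~\ref{lem:f2}, but with the extra twist flagged in the preamble to the statement: since $f_3$ arises from a \emph{single} indefinite theta representation rather than a pair, I expect the factor of $\frac12$ on the right-hand side of \eqref{eq:f3info2} to come from artificially symmetrizing that single sum. Concretely, I would first locate the indefinite theta representation of $f_3$ furnished by \cite{BK-Multiplicative}, analogous to \eqref{eq:BKf1} and \eqref{eq:BKf2}; this should express $f_3(q)$ as a sum of the form $\sum q^{4n^2+\cdots-2j^2-\cdots}(1\pm q^{\cdots})$ over a cone-shaped index set in the $(n,j)$-lattice. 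With the matrix $A=\left(\begin{smallmatrix}4&0\\0&-2\end{smallmatrix}\right)$ one has $Q\colvector{n}{j}=2n^2-j^2$, so the first task is to complete the square to match the exponent against $2(n+\tfrac12)^2-(j+\ast)^2$ up to the overall shift $q^{1/2}$ predicted by the value $\alpha=\tfrac12$ in Table~\ref{tableThm}.

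Next I would carry out the change of variables (some reflection $n\mapsto -n-k$, $j\mapsto -j$ or similar) that converts the two pieces of the $(1\pm q^{\cdots})$ expansion into a single sum over the union of two opposite cones, exactly as the maps $n\mapsto -n-2$ and $n\mapsto -n-1$ did in the proof of Lemma~\ref{lem:f1}. The key computation is then to verify that, with $a=\colvector{1/2}{0}$, the weight $\rho_B(r)=\frac12[1-\sgn(B(r,c_1)B(r,c_2))]$ equals the characteristic function of precisely this union of cones. Using $A=\left(\begin{smallmatrix}4&0\\0&-2\end{smallmatrix}\right)$ and the given $c_1,c_2$, the bilinear forms $B(r,c_j)=r^T A c_j$ are explicit linear forms in $n,j$ whose signs cut out the correct half-plane conditions; this is a direct sign analysis exactly parallel to \eqref{eq:lemf1c}. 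The role of $b=\colvector{0}{1/4}$ is new here and must be accounted for: unlike the $f_1,f_2$ cases where $b=0$ killed the phase $e(B(r,b))$, here $e(B(r,b))$ will contribute a nontrivial factor, and I would check that it either matches a sign $(-1)^{\text{something}}$ implicit in the Bringmann--Kane expansion or supplies the alternation needed to reconcile the two halves of the symmetrized sum.

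The point requiring the most care is the factor of $\frac12$. Because $f_3$ is a single sum, the naive identification would produce the double cone with multiplicity, and I would argue that the symmetry $r\mapsto -r$ of the summand — under which $Q(r)$ is invariant, $\rho_B(r)$ is invariant (it depends only on products of bilinear forms), and $e(B(r,b))$ behaves controllably — causes each lattice contribution to be counted exactly twice, whence the $\frac12$ restores equality. This is essentially the statement that $\rho_B$ supported on $a+\Z^2$ with $a=\colvector{1/2}{0}$ pairs $r$ with $-r-(1,0)^T$ or $-r$ in a fixed-point-free way (modulo the finitely many lattice points on the boundary, where $\rho_B$ vanishes by the $\sgn 0=0$ convention), so that summing over all of $a+\Z^2$ double-counts the terms that the one-sided sum for $f_3$ counts once. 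I would conclude by assembling the shifted-and-completed exponents, the characteristic-function identity for $\rho_B$, and the phase from $b$ to obtain \eqref{eq:f3info2}. The main obstacle I anticipate is bookkeeping the reflection symmetry cleanly enough to justify the exact factor $\frac12$ rather than leaving an off-by-a-boundary-term discrepancy, so I would pay particular attention to verifying that no lattice point satisfies $B(r,c_1)B(r,c_2)=0$ for these parameters, ensuring $\rho_B\in\{0,1\}$ everywhere and the pairing is genuinely free.
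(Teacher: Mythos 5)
Your proposal is correct and follows essentially the same route as the paper: you invoke the Bringmann--Kane representation $f_3(q)=\sum_{n\ge 0,\,-n\le j\le n}(-1)^j q^{2n^2+2n-j^2}$ (Proposition~3.4 of \cite{BK-Multiplicative}), complete the square to $2\left(n+\tfrac12\right)^2-j^2-\tfrac12$, symmetrize the one-sided sum onto the double cone via a reflection (the paper applies $n\mapsto -n-1$ with $j$ fixed to half the sum; your involution $r\mapsto -r$ works equally well since $(-1)^j$ is even in $j$), and identify $\rho_B$ as the characteristic function of that double cone with $e(B(r,b))=(-1)^j$ supplying the alternating sign. The boundary issue you flag resolves exactly as you anticipate: with $a=\colvector{1/2}{0}$ one computes $B(r,c_1)B(r,c_2)$ proportional to $\left(n+j+\tfrac12\right)\left(n-j+\tfrac12\right)$, which never vanishes on $a+\Z^2$, so $\rho_B\in\{0,1\}$, the pairing is fixed-point free, and the factor $\tfrac12$ is exact.
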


\begin{proof}
 Consider the quadratic form 
 \[ 2n^2 +2n -j^2 = 2\left(n+\frac12\right)^2 - j^2 -\frac12.\]
Then
 \begin{equation}
 \notag
 f_3 (q) = \sum_{\substack{n\geq 0 \\ -n \leq j \leq n}} (-1)^j q^{2n^2 +2n - j^2} =  \frac{1}{2} \left(\sum_{\substack{0 \leq n+j \\ 0 \leq n-j}} + \sum_{\substack{ n+j < 0 \\ n-j < 0}}\right) (-1)^j q^{2 \left(n+ \frac{1}{2} \right)^2 - j^2},
 \end{equation}
 where the first equality is Proposition $3.4$ of \cite{BK-Multiplicative} and the second follows from the taking the change of variables $n \mapsto -n-1$ on one-half of the sum, which preserves $2n^2 +2n -j^2$.
 
 Suppose ${r} = \left(\begin{smallmatrix} n\\ j \end{smallmatrix} \right)+\colvector{\frac12}{0}$. Thus $e(B({r} ,b)) = (-1)^{j}$ for all $n,j \in \mathbb{Z}$, and we obtain $\rho_B({r})$ along with the remainder of the proof of \eqref{eq:f3info2} in a similar way as in the previous lemmas.
 \end{proof}

 The functions $f_4$ and $f_5$ (as well as $LL(q)$ and $L(q)$ below) emit theta series when placed in the context of Zwegers' work.

\begin{lemma}\label{lem:f4}
Setting $A= \left(\begin{smallmatrix} 4&0 \\0&-2 \end{smallmatrix} \right)$,
  \begin{equation}
  \notag
  a =  \begin{pmatrix} 0 \\ 0 \end{pmatrix}, \hspace{5mm} b=\begin{pmatrix} 0 \\ \frac{1}{4} \end{pmatrix} ,\hspace{5mm} c_1 = \frac{1}{\sqrt{2}} \begin{pmatrix} -1 \\ 2 \end{pmatrix},  \hspace{3mm}  \text{and} \hspace{3mm}c_2 = \frac{1}{\sqrt{2}} \begin{pmatrix} 1 \\ 2 \end{pmatrix},
  \end{equation}
  we have
  \begin{equation}\label{eq:f4info2}
   f_4 (q) -\frac{1}{4} = -\frac{1}{2} \sum_{{r} \in a +\mathbb{Z}^2} \rho_B({r})q^{Q({r})} e(B({r} ,b)).
  \end{equation}
 \end{lemma}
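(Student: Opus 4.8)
The plan is to proceed exactly as in the proofs of Lemmas~\ref{lem:f1}--\ref{lem:f3}: begin with the Bringmann--Kane representation of $f_4$ as a sign-twisted theta-type sum (the analogue, for $f_4$, of the representation of $f_3$ used above, i.e.\ a formula of the shape $f_4(q)=-\sum_{n\geq1}\sum_{j}(-1)^j q^{2n^2-j^2}$ over a suitable range of $j$), complete the square to recognize the exponent as $Q({r})$, and then choose $a$, $b$ so that the summand matches $\rho_B({r})q^{Q({r})}e(B({r},b))$. The one genuinely new feature here is that $a=\colvector{0}{0}$, so that the lattice $a+\Z^2=\Z^2$ meets the degenerate locus on which $\rho_B=\tfrac12$; accounting for those terms is exactly what produces the constant $C=-\tfrac14$, and it is the only point requiring care beyond the earlier lemmas.

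First I would record the relevant quantities. Writing ${r}=\colvector{n}{j}$, one has $Q({r})=2n^2-j^2$, and since $Ab=\colvector{0}{-1/2}$ we get $B({r},b)=-j/2$, so that $e(B({r},b))=(-1)^j$ independently of any integer shift in the second coordinate. A direct computation using $c_1,c_2$ gives $B({r},c_1)B({r},c_2)=-8(n^2-j^2)$, whence
\[
\rho_B({r})=\tfrac12\left[1+\sgn\left(n^2-j^2\right)\right]=
\begin{cases} 1 & \text{if } |n|>|j|,\\ 0 & \text{if } |n|<|j|,\\ \tfrac12 & \text{if } |n|=|j|. \end{cases}
\]
As in the treatment of $f_3$, the single Bringmann--Kane sum (supported on $n\geq1$) is doubled using the symmetry $n\mapsto-n$ of the summand $(-1)^jq^{2n^2-j^2}$; this turns it into the symmetric sum over $\{|n|>|j|\}$, which is precisely the region on which $\rho_B=1$, up to the contribution of the diagonal $|n|=|j|$.

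The main obstacle, and the crux of the argument, is the bookkeeping on the degenerate diagonal $|n|=|j|$, where $\rho_B=\tfrac12$. Splitting $\Z^2$ into $\{|n|>|j|\}$, $\{|n|<|j|\}$ (killed by $\rho_B$), and $\{|n|=|j|\}$, I would note that for each $m\geq1$ the four lattice points with $|n|=|j|=m$ each contribute $(-1)^mq^{m^2}$, while the origin contributes the single value $q^0=1$. The half-weight $\rho_B=\tfrac12$ on the two diagonal rays is exactly what averages the asymmetric inclusion of a single endpoint ray in the Bringmann--Kane region, so these ray terms reassemble correctly into $f_4$. The origin, however, contributes $-\tfrac12\cdot\tfrac12\cdot1=-\tfrac14$ to the right-hand side of \eqref{eq:f4info2} (its $\rho_B$-value being $\tfrac12$ and its summand equal to $1$); since $Q({r})=0$ forces ${r}=\colvector{0}{0}$, this is the entire constant term on the right, and it matches the $-\tfrac14$ subtracted on the left, consistent with $f_4$ having vanishing constant term. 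Assembling the $|n|>|j|$ terms, the diagonal rays, and this origin term then yields \eqref{eq:f4info2}; every step other than the diagonal bookkeeping is the same completion-of-the-square computation already carried out for $f_1$--$f_3$.
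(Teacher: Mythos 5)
Your proof is correct and takes essentially the same route as the paper: the paper likewise starts from the Bringmann--Kane formula $f_4(q)=-\sum_{n\geq0,\,-n-1\leq j\leq n}(-1)^jq^{2(n+1)^2-j^2}$, doubles it using the sign change $n\mapsto-n-2$ (your $n\mapsto-n$ after the shift $n\mapsto n-1$), computes the same values $e(B({r},b))=(-1)^j$ and $2\rho_B({r})=1-\sgn(-8(n+j)(n-j))$, and matches the boundary via the identity $\sum_{n+j=0,\,n-j\geq1}(-1)^jq^{2n^2-j^2}=\sum_{n\geq1}(-1)^nq^{n^2}=\sum_{n-j=0,\,n+j<0}(-1)^jq^{2n^2-j^2}$. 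Your explicit isolation of the origin term (half-weight $\rho_B=\tfrac12$ yielding the constant $-\tfrac14$, with $Q({r})=0$ forcing ${r}=0$ on $\Z^2$) is left implicit in the paper but is the same bookkeeping.
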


\begin{proof}
 Proposition $3.5$ of \cite{BK-Multiplicative} states
 \begin{equation}
 \notag
 f_4 (q) = -\sum_{\substack{n\geq 0 \\ -n-1 \leq j \leq n}} (-1)^j q^{2n^2 +4n +2 - j^2} .
 \end{equation}
 Note 
 \[ 2n^2 +4n +2 -j^2 = 2(n+1)^2 - j^2, \]
and the map $n \mapsto -n-2$ preserves $2n^2 +4n +2 -j^2$. Therefore, just as in the previous lemma we find
 \begin{align*}
 -2f_4 (q) &= \left(\sum_{\substack{n\geq 0 \\ -n-1 \leq j \leq n}} +\sum_{\substack{-n+1\geq 0 \\ n+2 \leq j \leq -n-2}}\right) (-1)^j q^{2(n+1)^2 - j^2} 
 &= \left(\sum_{\substack{0 \leq n+j \\ 1 \leq n-j}} + \sum_{\substack{ n+j < 0 \\ n-j < 1}}\right) (-1)^j q^{2n^2 - j^2},
 \end{align*}
 where we also applied $n\mapsto n-1$ to obtain the last equality.
 
  Suppose ${r} = \left(\begin{smallmatrix} n \\ j \end{smallmatrix} \right)$ for $n,j \in \mathbb{Z}$. Then $e(B({r} ,b)) = (-1)^{j}$ and
 \begin{align*}
 2\rho_B({r}) &=\left[1-\sgn\left(-8\left(n+j\right)\left(n-j\right) \right) \right] \\
 &= \begin{cases} 2 &\text{if } n+j\geq 1 \text{ and } n-j\geq 1 \text{ or } n+j< 0 \text{ and } n-j< 0, \\ 1& \text{if } n+j =0 \text{ or } n-j=0,\\ 0 & \text{otherwise.} \end{cases}
 \end{align*}
 Equation \eqref{eq:f4info2} now follows from
 \begin{equation}
 \notag
 \sum_{\substack{n+j =0 \\n-j \geq 1}} (-1)^j q^{2n^2 -j^2} = \sum_{n\geq 1} (-1)^n q^{n^2} =\sum_{\substack{n-j =0 \\n+j < 0}} (-1)^j q^{2n^2 -j^2}.
 \end{equation}
 \end{proof}
 

\begin{lemma}\label{lem:f5}
  Setting $A= \left(\begin{smallmatrix} 3&0 \\0&-1 \end{smallmatrix} \right)$,
  \begin{equation}
  \notag
  a =  \begin{pmatrix} \frac{1}{2} \\ \frac{1}{2} \end{pmatrix}, \hspace{5mm} b=\begin{pmatrix} 0 \\ 0 \end{pmatrix} , \hspace{5mm} c_1 = \frac{1}{\sqrt{3}} \begin{pmatrix} -1 \\ 3 \end{pmatrix},  \hspace{3mm}  \text{and} \hspace{3mm}c_2 = \frac{1}{\sqrt{3}} \begin{pmatrix} 1 \\ 3 \end{pmatrix},
  \end{equation}
  we have
  \begin{equation}\label{eq:f5info2}
  q^{\frac{1}{4}} f_5 (q) = \frac{1}{2} \sum_{{r} \in a +\mathbb{Z}^2} \rho_B({r})q^{Q({r})} e(B({r} ,b)).
  \end{equation}
 \end{lemma}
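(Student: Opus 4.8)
My plan is to follow the same template as Lemmas~\ref{lem:f1}--\ref{lem:f4}: rewrite $f_5$ as a sum over a two-dimensional cone whose quadratic exponent agrees with $Q$, complete the square, double the cone by an involution, and recognize $\rho_B$ as the (weighted) characteristic function of the doubled region. The one external ingredient is a Hecke-type (indefinite) theta representation of $f_5$, namely an identity of the shape
\[
f_5(q)=\sum_{(n,j)\in\mathcal C} w(n,j)\, q^{\frac12(3n^2+3n-j^2-j)},
\]
where $\mathcal C\subseteq\Z^2$ is a cone carrying weight $1$ in its interior and weight $\tfrac12$ along its two bounding rays. Such a formula is exactly of the type produced by the Bailey-pair computations of \cite{BK-Multiplicative} (or \cite{Lovejoy}) used for $f_1,\dots,f_4$ above, and I would take it as the starting point; the absence of any sign in this sum is consistent with the choice $b=\colvector{0}{0}$, since then $e(B(r,b))=1$.

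Granting this, the algebraic core is a completion of the square. With $A=\left(\begin{smallmatrix}3&0\\0&-1\end{smallmatrix}\right)$ we have $Q(\nu)=\tfrac12(3\nu_1^2-\nu_2^2)$, and for $r=\colvector{n}{j}+a$ with $a=\colvector{1/2}{1/2}$ one computes
\[
Q(r)=\tfrac32\Bigl(n+\tfrac12\Bigr)^2-\tfrac12\Bigl(j+\tfrac12\Bigr)^2=\tfrac12\bigl(3n^2+3n-j^2-j\bigr)+\tfrac14,
\]
which simultaneously forces the prefactor $q^{1/4}$ and pins down $a$. One then checks $Q(c_1)=Q(c_2)=-1$ and $B(c_1,c_2)=-4<0$, so $c_1,c_2$ lie in the same component $C_Q^+$, and evaluates
\[
B(r,c_1)=-\sqrt3\,(n+j+1),\qquad B(r,c_2)=\sqrt3\,(n-j),
\]
giving
\[
\rho_B(r)=\tfrac12\Bigl[\,1+\sgn\bigl((n+j+1)(n-j)\bigr)\,\Bigr].
\]
Thus $\rho_B$ is the characteristic function (with value $\tfrac12$ on the bounding lines $n=j$ and $n+j=-1$) of the two opposite cones $\{n+j\ge0,\ n-j\ge1\}$ and $\{n+j\le-2,\ n-j\le-1\}$. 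The reflection $n\mapsto-n-1$ with $j$ fixed, i.e.\ $r_1\mapsto-r_1$, preserves both $Q(r)$ and $\rho_B(r)$ and interchanges these two cones; this is the involution that produces the global factor $\tfrac12$ in \eqref{eq:f5info2}, exactly as the substitution used in Lemma~\ref{lem:f3}.

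The step I expect to require real care is the boundary bookkeeping, just as in the proof of Lemma~\ref{lem:f4}. Because $a=\colvector{1/2}{1/2}$ makes $B(r,c_1)$ and $B(r,c_2)$ vanish at genuine lattice points (on $n+j=-1$ and $n=j$, respectively), the value $\rho_B=\tfrac12$ is actually attained, and one must verify that these half-weights combine across the reflection to reproduce the correct integer coefficients of $f_5$. On both bounding lines the exponent collapses to $Q(r)=(n+\tfrac12)^2$, so the half-weighted contributions are governed by a single one-variable theta series $\sum_{n}q^{(n+\frac12)^2}$; the task is to record the explicit identity matching the two rays of $\mathcal C$ against one another, in direct analogy with the displayed boundary identity at the end of Lemma~\ref{lem:f4}. (As a check, the four lattice points with $Q(r)=\tfrac14$ each carry weight $\tfrac12$ and together reproduce the constant term $1$ of $f_5$.) Once this matching is in place, adding the two cones and halving yields \eqref{eq:f5info2}; the remaining manipulations—the change of variables and the sign analysis—are routine.
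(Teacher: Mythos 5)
Your proposal is correct and follows essentially the same route as the paper's proof: it starts from the Hecke-type expansion of $f_5$ in Proposition 3.6 of \cite{BK-Multiplicative}, performs the same completion of the square forcing the prefactor $q^{1/4}$, uses the same involution $n\mapsto -n-1$ (with $j$ fixed) to double the cone, and handles the half-weights of $\rho_B$ on the lines $n=j$ and $n+j=-1$ via the one-variable theta series $\sum_{n} q^{\left(n+\frac12\right)^2}$, which is exactly the content of the paper's two displayed boundary identities. The only caveat is that Proposition 3.6 of \cite{BK-Multiplicative} assigns weight $1$ on the entire closed cone $\{n+j\geq 0,\ n-j\geq 0\}$ rather than weight $\tfrac12$ along bounding rays as you state; the half-weighted form you posit (for the cone $\{n+j\geq -1,\ n-j\geq 0\}$) is equivalent to it precisely by the ray-matching identity you yourself flag as the step requiring care, so your outline closes correctly.
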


\begin{proof}
 Proposition $3.6$ of \cite{BK-Multiplicative} states
 \begin{equation}
 \notag
 f_5 (q) = \sum_{-n \leq j \leq n} q^{\frac{3}{2}\left(n^2 +n\right) - \frac{1}{2}\left(j^2 +j\right)} .
 \end{equation}
 Note 
 \[
 \frac{3}{2}\left(n^2 +n\right) - \frac{1}{2}\left(j^2 +j\right) = \frac{3}{2}\left(n+\frac{1}{2}\right)^2 -\frac{1}{2}\left( j+\frac{1}{2}\right)^2 -\frac{1}{4},
 \]
  while the change of variables $n \mapsto -n-1$ preserves $n^2 +n$.  Using that
 \begin{equation}\notag
 \sum_{\substack{n-j =0 \\n+j \geq 0}} q^{\frac{3}{2}\left(n+\frac{1}{2}\right)^2 -\frac{1}{2}\left( j+\frac{1}{2}\right)^2} = \sum_{n\geq 0} q^{\left(n+\frac{1}{2}\right)^2} \quad \text{and} \quad \sum_{\substack{n+j =-1 \\n-j < 0}} q^{\frac{3}{2}\left(n+\frac{1}{2}\right)^2 -\frac{1}{2}\left( j+\frac{1}{2}\right)^2} =\sum_{n\leq -1} q^{\left(n+\frac{1}{2}\right)^2} ,
 \end{equation}
 a similar analysis as in Lemma \ref{lem:f4} establishes \eqref{eq:f5info2}.
 \end{proof}


\begin{lemma}\label{lem:f6}
  Setting $A= \left(\begin{smallmatrix} 6&0 \\0&-2 \end{smallmatrix} \right)$,
  \begin{equation}
  \notag
  a_1 =  \begin{pmatrix} \frac{1}{2}  \\ 0 \end{pmatrix}, \hspace{3mm} a_2 = \begin{pmatrix} 0  \\ \frac{1}{2} \end{pmatrix}, \hspace{3mm} \hspace{3mm} b=\begin{pmatrix} 0 \\ 0 \end{pmatrix} ,\hspace{3mm} c_1 = \frac{1}{\sqrt{6}} \begin{pmatrix} -1 \\ 3 \end{pmatrix}, \hspace{3mm}  \text{and} \hspace{3mm} c_2 = \frac{1}{\sqrt{6}} \begin{pmatrix} 1 \\ 3 \end{pmatrix},
  \end{equation}
  we have
  \begin{equation}\label{eq:f6info2}
  q^{-\frac{1}{4}} f_6 (q) =  -\frac{1}{2} \sum_{\ell =1}^2 \sum_{{r} \in a_\ell +\mathbb{Z}^2} \rho_B({r})q^{Q({r})} e(B({r} ,b)).
  \end{equation}
 \end{lemma}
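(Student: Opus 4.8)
The plan is to follow the template of Lemmas~\ref{lem:f1}--\ref{lem:f5}. First I would start from a known indefinite binary theta representation of $f_6$, analogous to those used in the earlier lemmas and available from \cite{Lovejoy}, writing $f_6(q)$ as a signed sum of $q^{E(n,j)}$ over wedge-shaped lattice regions, where the quadratic part of $E$ matches the form $Q$ attached to $A=\left(\begin{smallmatrix}6&0\\0&-2\end{smallmatrix}\right)$, namely $Q(r)=3r_1^2-r_2^2$. Because the phase vector is $b=\colvector{0}{0}$ we have $e(B(r,b))=1$, so the alternating sign $(-1)^n$ in the definition of $f_6$ must already be resolved within this representation; confirming this is part of the first step.

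Next I would complete the square in $E(n,j)$. As with $f_1$ and $f_2$, I expect the representation to split into two pieces which, after completing the square, correspond to the two cosets $a_1=\colvector{1/2}{0}$ and $a_2=\colvector{0}{1/2}$. In each piece completing the square should yield $E(n,j)=Q(r)+\tfrac14$ with $r=\colvector{n}{j}+a_\ell$; the resulting constant $+\tfrac14$ is exactly what produces the normalizing factor $q^{-1/4}$ in \eqref{eq:f6info2}.

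The heart of the argument is to replace each wedge sum by a sum over the full ``same-sign'' region weighted by $\rho_B$. Using the stated $c_1,c_2$ one computes
\[
B(r,c_1)B(r,c_2) = -6(r_1+r_2)(r_1-r_2) = -6\bigl(r_1^2-r_2^2\bigr),
\]
so that $\rho_B(r)=\tfrac12\bigl[1+\sgn(r_1^2-r_2^2)\bigr]$. For $r=\colvector{n}{j}+a_1$ this is the characteristic function of $\{n+j\ge0,\ n-j\ge0\}\cup\{n+j\le-1,\ n-j\le-1\}$, and there is an analogous description for $a_2$. A $Q$-preserving change of variables $n\mapsto -n-1$ (and its analogue for the second coset) doubles each wedge into precisely this region, which both matches $\rho_B$ and produces the overall factor $-\tfrac12$. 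A pleasant simplification relative to Lemma~\ref{lem:f4} is that no boundary terms arise: since each $a_\ell$ is half-integral in one coordinate, $r_1^2-r_2^2$ is never zero, so $\rho_B\in\{0,1\}$ and there is no $\rho_B=\tfrac12$ contribution to reconcile.

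The main obstacle I anticipate is the first step: producing the indefinite theta representation of $f_6$ in exactly the needed form, and then carefully tracking the two cosets together with the doubling. Unlike $f_1$--$f_5$, here one must simultaneously split into two residue classes \emph{and} symmetrize each, so care is needed to verify that the two wedge sums reassemble as $-\tfrac12(\Theta_{a_1}+\Theta_{a_2})$ with matching exponents and no double counting. Once the representation and square completion are fixed, the remaining verification of $\rho_B$ against the summation regions is routine and mirrors the earlier proofs.
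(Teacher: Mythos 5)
Your proposal is correct and takes essentially the same route as the paper, whose proof simply quotes the Hecke-type representation $f_6(q) = -\sum_{n\geq 0,\, -n \leq j \leq n} q^{3n^2+3n+1-j^2} - \sum_{n\geq 0,\, -n \leq j \leq n-1} q^{3n^2-j^2-j}$ (Proposition 3.7 of \cite{BK-Multiplicative}, the correct source rather than \cite{Lovejoy}) and then says to argue as in Lemma \ref{lem:f1}: complete the square to obtain the cosets $a_1$, $a_2$ and the constant $\frac14$, and double each wedge via the $Q$-preserving substitutions $n \mapsto -n-1$ (first sum) and $n \mapsto -n$ (second sum). Your verification that $B(r,c_1)B(r,c_2) = -6\left(r_1^2 - r_2^2\right)$ makes $\rho_B$ the characteristic function of the doubled regions, with no boundary ($\rho_B = \tfrac12$) terms since the shifted factors are never zero, supplies exactly the details the paper leaves to the reader.
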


 \begin{proof}
 Proposition $3.7$ of \cite{BK-Multiplicative} states
 \begin{equation}
 \notag
 f_6 (q) = - \sum_{\substack{n\geq 0 \\ -n \leq j \leq n}} q^{3n^2 +3n +1 - j^2} - \sum_{\substack{n\geq 0 \\ -n \leq j \leq n-1}} q^{3n^2 - j^2 -j}.
 \end{equation}
A similar proof to that of Lemma \ref{lem:f1} gives \eqref{eq:f6info2}.
\end{proof}


\begin{lemma}\label{lem:f7}
  Setting $A= \left(\begin{smallmatrix} 6&0 \\0&-2 \end{smallmatrix} \right)$,
  \begin{equation}
  \notag
  a = \begin{pmatrix} \frac{1}{3} \\ 0 \end{pmatrix}, \hspace{5mm} b=\begin{pmatrix} \frac{1}{12} \\ \frac{1}{4} \end{pmatrix} , \hspace{5mm} c_1 = \frac{1}{\sqrt{6}} \begin{pmatrix} -1 \\ 3 \end{pmatrix},  \hspace{3mm}  \text{and} \hspace{3mm}c_2 = \frac{1}{\sqrt{6}} \begin{pmatrix} 1 \\ 3 \end{pmatrix},
  \end{equation}
  we have
  \begin{equation}\label{eq:f7info2}
  q^{\frac{1}{3}} f_7 (q) =  e^{-\pi i/3} \sum_{{r} \in a +\mathbb{Z}^2} \rho_B({r})q^{Q({r})} e(B({r} ,b)),
  \end{equation}
  where $\zeta_n := e(1/n)$.
 \end{lemma}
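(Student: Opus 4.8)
The plan is to follow the template established in the proof of Lemma~\ref{lem:f1}, the only genuinely new feature being that the rational doubling constant there is replaced here by the root of unity $e^{-\pi i/3}$. First I would invoke the indefinite-theta representation of $f_7$ supplied by the relevant proposition of \cite{BK-Multiplicative} (the analogue of \eqref{eq:BKf1}), which can be written in the form
\[
f_7(q) = \sum_{\substack{n\geq0\\ -n\leq j\leq n}} (-1)^{n+j}\, q^{3n^2+2n-j^2}\left(1-q^{2n+1}\right).
\]
Completing the square via $3n^2+2n-j^2 = 3\left(n+\tfrac13\right)^2 - j^2 - \tfrac13$, I would set $r=\colvector{n}{j}+a$, so that $q^{Q(r)} = q^{1/3}\,q^{3n^2+2n-j^2}$; this is exactly the source of the prefactor $q^{1/3}$ on the left-hand side of \eqref{eq:f7info2}. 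It therefore suffices to prove that $f_7(q) = e^{-\pi i/3}\sum_{r\in a+\Z^2}\rho_B(r)\,q^{3n^2+2n-j^2}e(B(r,b))$.

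Next, as in Lemma~\ref{lem:f1}, I would expand the factor $1-q^{2n+1}$ into two sums and apply the change of variables $n\mapsto -n-1$ to the second. A direct check shows that this substitution preserves $3n^2+2n-j^2$, carries the triangle $\{n+j\geq0,\ n-j\geq0\}$ onto its complement $\{n+j\leq-1,\ n-j\leq-1\}$, and sends $(-1)^{n+j}$ to $-(-1)^{n+j}$; this extra sign cancels the minus sign coming from $-q^{2n+1}$, so the two pieces assemble into a single sum of $(-1)^{n+j}q^{3n^2+2n-j^2}$ over the union of the two triangles. I would stress that no factor of $\tfrac12$ appears here, in contrast to Lemmas~\ref{lem:f3}--\ref{lem:f5}: because the shift $\tfrac13$ breaks the integer reflection symmetry, the two triangular regions contribute genuinely distinct terms rather than equal halves, and the generation of the second region is effected by the $\left(1-q^{2n+1}\right)$ factor instead of by a reflection.

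It then remains to match this combinatorial description against the weight $\rho_B$ and the phase. For $r=\colvector{n}{j}+a$ I would compute $B(r,c_1)B(r,c_2) = -6\left(n+j+\tfrac13\right)\left(n-j+\tfrac13\right)$, whence, since neither factor can vanish, $\rho_B(r)$ equals $1$ on exactly $\{n+j\geq0,\ n-j\geq0\}\cup\{n+j\leq-1,\ n-j\leq-1\}$ and $0$ elsewhere — the very union of triangles found above, and with no boundary terms to handle separately (unlike in the proof of Lemma~\ref{lem:f4}, a simplification forced precisely by the shift $\tfrac13\notin\tfrac12\Z$). Finally I would compute $B(r,b) = \tfrac{n-j}{2}+\tfrac16$, so that $e^{-\pi i/3}e(B(r,b)) = e\!\left(-\tfrac16\right)e\!\left(\tfrac16\right)(-1)^{n-j} = (-1)^{n+j}$, confirming that the prefactor $e^{-\pi i/3}$ is exactly what reconciles the phase $e(B(r,b))$ with the sign $(-1)^{n+j}$, and thereby establishing \eqref{eq:f7info2}.

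The main obstacle I anticipate is precisely this sign-and-phase bookkeeping. Because the first coordinate of $a$ is $\tfrac13$ rather than a half-integer, there is no integer reflection identifying the two regions as equal halves, so one cannot simply double a single triangle; instead the $\left(1-q^{2n+1}\right)$ factor must generate the complementary region with exactly the right sign, and the constant $e^{-\pi i/3}$ must be tuned so that $e^{-\pi i/3}e(B(r,b))$ collapses to $(-1)^{n+j}$. Arranging the $q^{1/3}$ normalization, the sign $(-1)^{n+j}$, and the root of unity to fall into place simultaneously is where the care is required; everything else is the routine verification carried out in Lemma~\ref{lem:f1}.
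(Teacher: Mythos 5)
Your proof is correct and takes essentially the same route as the paper's (very terse) argument: invoke Proposition 3.8 of \cite{BK-Multiplicative}, complete the square $3n^2+2n-j^2=3\left(n+\tfrac13\right)^2-j^2-\tfrac13$, fold the $-q^{2n+1}$ piece onto the opposite triangle via $n\mapsto -n-1$, and check that $\rho_B$ and the phase reproduce the support and the sign $(-1)^{n+j}$. Your phase bookkeeping $B(r,b)=\tfrac{n-j}{2}+\tfrac16$, hence $e^{-\pi i/3}e(B(r,b))=(-1)^{n+j}$, is the correct one; in fact the paper's proof contains a small sign typo at this point, asserting $e(B(r,b))=(-1)^{n+j}e^{-\pi i/3}$, whereas the value consistent with the stated prefactor $e^{-\pi i/3}$ is $(-1)^{n+j}e^{\pi i/3}$, exactly as you computed. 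One sentence of yours needs repair, though it does not affect the argument: the substitution $n\mapsto -n-1$ does \emph{not} preserve $3n^2+2n-j^2$; it interchanges $3n^2+2n-j^2$ with $3n^2+4n+1-j^2$ (this is precisely what the paper records). Since you apply it to the summand carrying the extra factor $q^{2n+1}$, whose exponent is $3n^2+4n+1-j^2$, the substitution returns exactly $3n^2+2n-j^2$ together with the sign flip cancelling the minus sign --- which is the statement you actually use downstream, so the slip is purely one of wording. Similarly, $\{n+j\le -1,\ n-j\le -1\}$ is the image of the first triangle under the substitution, not its complement in $\Z^2$; since you specify the set explicitly, this too is harmless. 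With those wording fixes the argument is complete.
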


 \begin{proof}
 Proposition $3.8$ of \cite{BK-Multiplicative} states
 \begin{equation}
 \notag
 f_7 (q) = \sum_{\substack{n\geq 0 \\ -n \leq j \leq n}} (-1)^{n+j} q^{3n^2 + 2n - j^2} \left(1-q^{2n+1}\right) .
 \end{equation}
 In this case, 
 \begin{equation}\label{eq:f7eq1}
 3n^2 + 2n - j^2 = 3\left(n+\frac{1}{3}\right)^2 -j^2 -\frac{1}{3},
 \end{equation}
  while applying the change of variables $n \mapsto -n-1$ to \eqref{eq:f7eq1} gives $3n^2 + 4n +1 - j^2$. Therefore, taking ${r} = \left(\begin{smallmatrix} n \\ j \end{smallmatrix} \right)+\colvector{1/3}{0}$ for $n,j \in \mathbb{Z}$ leads to finding $e(B({r} ,b)) = (-1)^{n+j}e^{-\pi i /3}$, and a similar analysis to the other cases gives \eqref{eq:f7info2}.
 \end{proof}


\begin{lemma}\label{lem:f8}
  Setting $A= \left(\begin{smallmatrix} 12&0 \\0&-4 \end{smallmatrix} \right)$,
  \begin{equation}
  \notag
  a_1 =  \begin{pmatrix} \frac{1}{3} \\ 0 \end{pmatrix}, \hspace{3mm} a_2 =  \begin{pmatrix} \frac{1}{6} \\ \frac{1}{2} \end{pmatrix}, \hspace{3mm} b=\begin{pmatrix} 0 \\ 0 \end{pmatrix} ,\hspace{3mm} c_1 = \frac{1}{\sqrt{12}} \begin{pmatrix} -1 \\ 3 \end{pmatrix}, \hspace{3mm}  \text{and} \hspace{3mm} c_2 = \frac{1}{\sqrt{12}} \begin{pmatrix} 1 \\ 3 \end{pmatrix},
  \end{equation}
  we have
  \begin{equation}\label{eq:f8info2}
  q^{-\frac{1}{3}} f_8 (q) =  \sum_{\ell =1}^2 (-1)^{\ell+1} \sum_{{r} \in a_\ell +\mathbb{Z}^2} \rho_B({r})q^{Q({r})} e(B({r} ,b)).
  \end{equation}
  \end{lemma}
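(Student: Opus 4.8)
The plan is to follow the template established in Lemmas~\ref{lem:f1}--\ref{lem:f7}. First I would invoke the relevant proposition of \cite{BK-Multiplicative} (the analogue for $f_8$ of Propositions~3.1--3.8), which expresses $f_8(q)$ as a sum of two indefinite theta-type series over triangular index sets in $\Z^2$. Since the form attached to $A=\left(\begin{smallmatrix} 12&0 \\ 0&-4\end{smallmatrix}\right)$ is $Q(r)=6\nu_1^2-2\nu_2^2$ for $r=\colvector{\nu_1}{\nu_2}$, I would complete the square in each summand to match the two shifts $a_1=\colvector{1/3}{0}$ and $a_2=\colvector{1/6}{1/2}$. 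Concretely one checks $6(n+\tfrac13)^2-2j^2 = 6n^2+4n+\tfrac23-2j^2$ and $6(n+\tfrac16)^2-2(j+\tfrac12)^2 = 6n^2+2n-2j^2-2j-\tfrac13$, so that each completed exponent equals $Q(r_\ell)\pm\tfrac13$ with $r_\ell=\colvector{n}{j}+a_\ell$; multiplying by $q^{-1/3}$ then converts the two pieces into $\sum q^{Q(r_1)}$ and $\sum q^{Q(r_2)}$. Because the target \eqref{eq:f8info2} carries coefficient $\pm1$ (not $\tfrac12$, as in Lemma~\ref{lem:f5}), I expect each BK sum to come with a factor $\bigl(1\pm q^{\,\cdots}\bigr)$, exactly as in the proof of Lemma~\ref{lem:f1}.

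Next, as in that earlier argument, I would expand each factor $\bigl(1\pm q^{\,\cdots}\bigr)$ and apply to the resulting second piece a quadratic-form-preserving substitution $n\mapsto -n-c$, which carries the original triangular index set onto the complementary cone. The two pieces then combine into a single sum over all of $a_\ell+\Z^2$, weighted by the characteristic function of the full cone. The remaining task is to verify that this characteristic function is precisely $\rho_B$. Computing $B(r,\mu)=12\nu_1\mu_1-4\nu_2\mu_2$ against $c_1=\tfrac{1}{\sqrt{12}}\colvector{-1}{3}$ and $c_2=\tfrac{1}{\sqrt{12}}\colvector{1}{3}$ gives $B(r,c_1)B(r,c_2)=-12(\nu_1^2-\nu_2^2)$, whence $\rho_B(r)=\tfrac12\bigl[1+\sgn(\nu_1^2-\nu_2^2)\bigr]$; thus $\rho_B(r)=1$ exactly when $|\nu_1|>|\nu_2|$ and $0$ otherwise. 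Since both shifts have nonintegral first coordinate ($\tfrac13$, resp.\ $\tfrac16$), the quantity $\nu_1^2-\nu_2^2$ never vanishes, so no boundary (half-value) terms arise --- a simplification relative to Lemmas~\ref{lem:f4} and \ref{lem:f5}. Finally, the choice $b=\colvector{0}{0}$ gives $e(B(r,b))=1$, and the opposite signs of the two BK sums reproduce the factor $(-1)^{\ell+1}$, yielding \eqref{eq:f8info2}.

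The underlying computations are routine; the only genuine obstacle is bookkeeping. Specifically, one must confirm that the substitution maps each triangular summation region exactly onto the cone cut out by $\rho_B$ --- checking that the two index sets coming from each $\bigl(1\pm q\bigr)$ factor tile the relevant shifted cone without overlap or omission --- and that the relative sign between the two sums is tracked correctly through the change of variables to match $(-1)^{\ell+1}$. This is the same region-matching argument made explicit in \eqref{eq:lemf1c} and \eqref{eq:lemf1d}, and I would carry it out in detail for the $\ell=1$ piece and note that the $\ell=2$ piece is entirely analogous.
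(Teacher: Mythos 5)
Your proposal is correct and follows essentially the same route as the paper: the paper's proof likewise invokes Proposition~3.9 of \cite{BK-Multiplicative}, completes the squares $6n^2+4n+1-2j^2 = 6\left(n+\frac{1}{3}\right)^2-2j^2+\frac{1}{3}$ and $6n^2-2n-2j^2-2j = 6\left(n-\frac{1}{6}\right)^2-2\left(j+\frac{1}{2}\right)^2+\frac{1}{3}$, applies the substitutions $n\mapsto -n-1$ (resp.\ $n\mapsto -n$) to double each triangular region into the full cone, and then defers to ``a similar analysis to Lemma~\ref{lem:f1}''---precisely the region-matching and $\rho_B$ computation you carry out explicitly, including the correct observation that no boundary (half-weight) terms arise and that $(-1)^{\ell+1}$ comes from the relative sign of the two Bringmann--Kane sums. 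Two cosmetic slips, neither affecting validity: both completed exponents differ from $Q(r_\ell)$ by $+\frac{1}{3}$ (your ``$\pm\frac13$'' would break the uniform $q^{-1/3}$ normalization if a $-\frac13$ actually occurred), and both factors in \eqref{eq:BKf8} are of the form $\left(1+q^{\cdots}\right)$, with the overall minus sign sitting in front of the first sum.
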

  
\begin{proof}
 Proposition $3.9$ of \cite{BK-Multiplicative} states
 \begin{equation}\label{eq:BKf8}
 f_8 (q) = -\sum_{\substack{n\geq 1 \\ -n \leq j \leq n-1}} q^{6n^2 -2n -2j^2 -2j}\left( 1+q^{4n}\right) 
   + \sum_{\substack{n\geq 0 \\ -n \leq j \leq n}} q^{6n^2 +4n +1 -2j^2}\left( 1+q^{4n+2}\right).
 \end{equation}
 Focusing on the left sum we note 
 \[
 6n^2 -2n -2j^2 -2j = 6\left(n-\frac{1}{6}\right)^2 -2\left(j+\frac{1}{2}\right)^2 +\frac{1}{3}.
 \]
  The change of variables $n \mapsto -n$ makes this expression $6n^2 +2n -2j^2 -2j$. (See the discussion found in the proof of Lemma \ref{lem:f8} for the rationale of our choice of $a_2$ here.) Meanwhile, in the right sum of \eqref{eq:BKf8} we find 
  \begin{equation}\label{eq:f8eq1}
  6n^2 +4n +1 -2j^2  = 6\left(n+\frac{1}{3}\right)^2 -2j^2 +\frac{1}{3}.
  \end{equation}
   Here, applying $n \mapsto -n-1$ to \eqref{eq:f8eq1} gives $6n^2 +8n +3 -2j^2$. A similar analysis to Lemma \ref{lem:f1} gives \eqref{eq:f8info2}. 
\end{proof}


\begin{lemma}\label{lem:LL}
  Setting $A= \left(\begin{smallmatrix} 4&0 \\0&-2 \end{smallmatrix} \right)$,
  \begin{equation}
  \notag
  a =  \begin{pmatrix} 0 \\ 0 \end{pmatrix}, \hspace{3mm} b=\begin{pmatrix} \frac{1}{8} \\ -\frac{1}{4} \end{pmatrix} ,\hspace{3mm} c_1 = \frac{1}{\sqrt{2}} \begin{pmatrix} -1 \\ 2 \end{pmatrix}, \hspace{3mm}  \text{and} \hspace{3mm} c_2 = \frac{1}{\sqrt{2}} \begin{pmatrix} 1 \\ 2 \end{pmatrix},
  \end{equation}
  we have
  \begin{equation}\label{eq:LLinfo2}
 LL(q) -\frac{1}{4}= -\frac{1}{2} \sum_{{r} \in a +\mathbb{Z}^2} \rho_B({r})q^{Q({r})} e(B({r} ,b)).
  \end{equation}
  \end{lemma}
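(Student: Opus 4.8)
The plan is to follow the template established for $f_4$ in Lemma~\ref{lem:f4}, to which this lemma is the closest analogue: both use $A=\left(\begin{smallmatrix}4&0\\0&-2\end{smallmatrix}\right)$, both take $a=\colvector{0}{0}$, both carry the prefactor $-\tfrac12$, and both produce the additive constant $-\tfrac14$. First I would start from a Hecke-type indefinite theta representation of $LL$, for which I would appeal to the work of Lovejoy~\cite{Lovejoy} (the companion source to \cite{BK-Multiplicative} used in the previous lemmas); this should express $LL(q)$ as a single sum of the shape $\pm\sum_{\mathrm{cone}}(-1)^{n+j}q^{2n^2+(\mathrm{linear})-j^2}$ over a wedge-shaped index set. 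Completing the square in the exponent, the linear terms should be absorbable by an \emph{integer} shift, so that the quadratic part becomes exactly $Q({r})=2n^2-j^2$ with no fractional translation, consistent with the choice $a=\colvector{0}{0}$; locating (or re-deriving) this representation and verifying that the sign pattern really is $(-1)^{n+j}$ is the genuinely new input relative to $f_4$.

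With ${r}=\colvector{n}{j}\in\Z^2$, a direct computation gives $B({r},b)={r}^T A b=\tfrac{n+j}{2}$ for $b=\colvector{1/8}{-1/4}$, so that $e(B({r},b))=(-1)^{n+j}$ reproduces the sign appearing in the theta representation; note that the nonzero first entry $\tfrac18$ of $b$ is exactly what supplies the extra factor $(-1)^n$ not present in the $f_4$ case. Next I would compute, just as in Lemma~\ref{lem:f4}, that $B({r},c_1)B({r},c_2)=-8(n+j)(n-j)$ for the stated $c_1,c_2$, whence
\[
\rho_B({r})=\tfrac12\left[1+\sgn\left(n^2-j^2\right)\right]=
\begin{cases} 1 & |n|>|j|,\\ \tfrac12 & |n|=|j|,\\ 0 & |n|<|j|.\end{cases}
\]
Applying the sign-preserving change of variables $n\mapsto -n-k$ (for the integer $k$ dictated by the square-completion) to one half of the original sum doubles it over the complementary wedge, so that the union of the two wedges is precisely the open double cone $\{|n|>|j|\}$ together with one copy of each bounding diagonal ray.

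The hard part, as in Lemma~\ref{lem:f4}, is the bookkeeping of the diagonal terms $|n|=|j|$ where $\rho_B=\tfrac12$. On the rays $j=\pm n$ one has $2n^2-j^2=n^2$ and $(-1)^{n+j}=1$, so these rays contribute the \emph{non-alternating} series $\sum q^{n^2}$ (in contrast to the alternating $\sum(-1)^nq^{n^2}$ that appeared for $f_4$); I would use the pair of identities
\[
\sum_{\substack{n+j=0\\ n-j\geq 1}}(-1)^{n+j}q^{2n^2-j^2}=\sum_{n\geq 1}q^{n^2}=\sum_{\substack{n-j=0\\ n+j<0}}(-1)^{n+j}q^{2n^2-j^2}
\]
to match the ray contributions coming from the doubled sum against the half-weighted diagonal in $\sum_{{r}}\rho_B({r})q^{Q({r})}e(B({r},b))$. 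The only term left unmatched is the origin ${r}=\colvector{0}{0}$, which carries weight $\rho_B=\tfrac12$ and value $1$ and hence accounts for the constant $-\tfrac12\cdot\tfrac12=-\tfrac14$. Collecting everything should yield $LL(q)-\tfrac14=-\tfrac12\sum_{{r}\in a+\Z^2}\rho_B({r})q^{Q({r})}e(B({r},b))$, which is \eqref{eq:LLinfo2}. Thus the main obstacle is not conceptual but the careful accounting of these boundary rays and the origin, together with the verification that the modified sign $(-1)^{n+j}$ keeps the diagonal contributions coherent enough to cancel cleanly.
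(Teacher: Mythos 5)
Your proposal is correct and follows essentially the same route as the paper's proof: the paper likewise starts from the known Hecke-type representation $LL(q)=\sum_{n\geq 1}\sum_{-n<j\leq n}(-1)^{n+j+1}q^{2n^2-j^2}$ (quoted from (5.2) of \cite{LNR-Renorm2} and (2.11) of \cite{Lovejoy}, so the representation you flag as the ``genuinely new input'' is simply cited, and indeed has no linear term, consistent with $a=0$), uses the symmetry $n\mapsto -n$ to double the sum, computes $e(B({r},b))=(-1)^{n+j}$ and the same $\rho_B$, and settles the boundary via the non-alternating ray identity $\sum_{n\geq 1}q^{n^2}$ with the origin contributing the constant $-\tfrac14$. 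Your version of the diagonal identity uses the complementary pair of rays to the one displayed in the paper, but since all four rays carry the identical series this difference is immaterial.
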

  
\begin{proof}
 The series
 \[
 LL(q) := \sum_{n=1}^\infty \frac{(q)_{n-1} (-1)^n q^{\frac{1}{2}n(n+1)}}{(-q)_n}
 \]
 is originally found in \cite{Lovejoy}, and for $\lvert q \rvert <1$  we have
 \begin{equation}
 \notag
 LL(q) =\sum_{n=1}^\infty \sum_{-n <j\leq n} (-1)^{n+j+1} q^{2n^2 -j^2}
 \end{equation}
 (see (5.2) in \cite{LNR-Renorm2} and (2.11) in \cite{Lovejoy}). Note $n\mapsto -n$ preserves $2n^2 -j^2$. Additionally, for ${r} = \left(\begin{smallmatrix} n \\ j \end{smallmatrix} \right)$, $n,j \in \mathbb{Z}$, we have $e(B({r} ,b)) =(-1)^{n+j}$, and also
 \begin{equation}
 \notag
 \sum_{\substack{n-j =0 \\ 0<n+j}} (-1)^{n+j} q^{2n^2 -j^2} = \sum_{n\geq 1} q^{n^2} =\sum_{\substack{n+j =0 \\ n-j <0}} (-1)^{n+j} q^{2n^2 -j^2}.
 \end{equation}
 A similar analysis as in the proof of Lemma \ref{lem:f4} gives \eqref{eq:LLinfo2}. 
 \end{proof} 


 \begin{lemma}\label{lem:L}
  Setting $A= \left(\begin{smallmatrix} 4&0 \\0&-2 \end{smallmatrix} \right)$,
  \begin{equation}
  \notag
  a =  \begin{pmatrix} 0 \\ 0 \end{pmatrix}, \hspace{3mm} b= \begin{pmatrix} \frac{1}{8} \\ 0 \end{pmatrix} ,\hspace{3mm} c_1 = \frac{1}{\sqrt{2}} \begin{pmatrix} -1 \\ 2 \end{pmatrix}, \hspace{3mm}  \text{and} \hspace{3mm} c_2 = \frac{1}{\sqrt{2}} \begin{pmatrix} 1 \\ 2 \end{pmatrix},
  \end{equation}
  we have
  \begin{equation}\label{eq:Linfo2}
 L(q) -\frac{1}{4}  = -\frac{1}{2} \sum_{{r} \in a +\mathbb{Z}^2} \rho_B({r})q^{Q({r})} e(B({r} ,b)).
  \end{equation}
  \end{lemma}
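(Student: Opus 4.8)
The plan is to mirror the proofs of Lemmas~\ref{lem:f4} and~\ref{lem:LL}, to which this case is closest: the matrix $A=\left(\begin{smallmatrix} 4&0\\0&-2\end{smallmatrix}\right)$ and the vectors $c_1,c_2$ here are identical to those appearing there, so the only genuinely new input is a double-sum representation of $L$ and the computation of $e(B(r,b))$. First I would record a theta-type representation of $L$. By analogy with the representation of $LL$ used above (and consistent with the relation $L(q^{-1})=LL(q)$ noted in the Remarks), the relevant identity, available in \cite{Lovejoy,LNR-Renorm2}, is
\[
L(q)=\sum_{n\geq 1}\sum_{-n<j\leq n}(-1)^{n+1}q^{2n^2-j^2},
\]
which one can confirm by checking the first several coefficients against the defining $q$-series. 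The associated quadratic form is $Q\colvector{n}{j}=2n^2-j^2$, manifestly invariant under $n\mapsto -n$; this is the symmetry that lets me fold the half-range $-n<j\leq n$ into a sum over all of $\Z^2$.

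Next I would assemble the two elementary ingredients attached to the chosen parameters. For $r=\colvector{n}{j}$ and $b=\colvector{1/8}{0}$ one computes $B(r,b)=n/2$, so that $e(B(r,b))=(-1)^{n}$; this supplies exactly the sign $(-1)^{n+1}=-(-1)^n$ of the representation together with the overall factor $-\tfrac12$ on the right-hand side of \eqref{eq:Linfo2}. Since $A$, $c_1$, and $c_2$ coincide with those of Lemma~\ref{lem:f4}, the weight $\rho_B(r)$ is unchanged: one finds $2\rho_B(r)=1+\sgn((n+j)(n-j))$, so $\rho_B$ equals $1$ on the cone $|n|>|j|$, equals $0$ on $|n|<|j|$, and equals $\tfrac12$ on the diagonals $j=\pm n$ (including the origin).

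The substance of the argument is then the bookkeeping on the diagonals, exactly as in Lemma~\ref{lem:f4}. Folding the interior cone $|n|>|j|$ via $n\mapsto -n$ reproduces twice the interior of the half-range sum, while the weight-$\tfrac12$ diagonal contributions are controlled by
\[
\sum_{\substack{n-j=0\\ 0<n+j}}(-1)^{n}q^{2n^2-j^2}=\sum_{n\geq 1}(-1)^{n}q^{n^2}=\sum_{\substack{n+j=0\\ n-j<0}}(-1)^{n}q^{2n^2-j^2},
\]
which restores the endpoint $j=n$ of the range $-n<j\leq n$ against the missing endpoint $j=-n$. The one term untouched by this symmetrization is the origin $r=\colvector{0}{0}$, where $\rho_B=\tfrac12$, $Q=0$, and $(-1)^n=1$; it contributes $-\tfrac12\cdot\tfrac12=-\tfrac14$. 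Collecting everything yields $-\tfrac12\sum_{r\in a+\Z^2}\rho_B(r)q^{Q(r)}e(B(r,b))=L(q)-\tfrac14$, which is \eqref{eq:Linfo2}. The hard part is precisely this last step: one must track the half-weighted diagonal terms with care, since it is the lone origin term that produces the constant $-\tfrac14$, and an off-by-one in the treatment of the endpoints $j=\pm n$ would corrupt both that constant and the $q^{n^2}$ contributions. Everything else is the same characteristic-function-and-symmetrization computation carried out in detail for $f_1$ and $f_4$.
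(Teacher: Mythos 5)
Your proof is correct and, in substance, follows the same route as the paper. Your elementary computations all check out: $B(r,b)=r^TAb=n/2$ so $e(B(r,b))=(-1)^n$; $2\rho_B(r)=1+\sgn((n+j)(n-j))$ with value $\tfrac12$ on the diagonals $j=\pm n$ and at the origin; folding the cone $|n|>|j|$ under $n\mapsto -n$; the diagonal identity matching the doubled endpoint $j=n$; and the lone origin term contributing $-\tfrac12\cdot\tfrac12=-\tfrac14$. (In fact your displayed diagonal identity with $(-1)^n$ is the corrected form of the corresponding display in the paper's proof, which has a typo $(-1)^{r}$.) The paper is terser only in that it reduces directly to Lemma~\ref{lem:LL} rather than rerunning the folding of Lemma~\ref{lem:f4}, but these are the same argument.

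The one genuine soft spot is your justification of the key input, the representation
\[
L(q)=\sum_{n\geq 1}\sum_{-n<j\leq n}(-1)^{n+1}q^{2n^2-j^2}.
\]
``Checking the first several coefficients'' is not a proof, and the relation you cite from the Remarks, $L(q^{-1})=LL(q)$, is a statement about behavior at roots of unity, not the identity needed inside the unit disk. What the paper actually uses, and what you should invoke, is Proposition~5.5 of \cite{LNR-Renorm2}, which gives $L(-q)=LL(q)$ as an identity of power series for $|q|<1$. Applying $q\mapsto -q$ to the expansion $LL(q)=\sum_{n\geq 1}\sum_{-n<j\leq n}(-1)^{n+j+1}q^{2n^2-j^2}$ established in Lemma~\ref{lem:LL}, and noting $(-1)^{2n^2-j^2}=(-1)^{j^2}=(-1)^{j}$ so that the sign becomes $(-1)^{n+2j+1}=(-1)^{n+1}$, yields exactly your representation. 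With that one substitution in place of the numerical check, your argument is complete and coincides with the paper's.
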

  
\begin{proof}
 The series
 \[
 L(q) := \sum_{n=1}^\infty \frac{q^n  \left(q^2 ;q^2\right)_{n-1}}{\left(-q^2 ;q^2 \right)_n}
 \]
 is considered in \cite{LNR-Renorm2} (see display (5.3)). By Proposition 5.5 of \cite{LNR-Renorm2} we have $L(-q) =LL(q)$, and the remainder of the proof of \eqref{eq:Linfo2} follows from the previous lemma and the equality
 \begin{equation}
 \notag
 \sum_{\substack{n-j =0 \\ 0<n+j}} (-1)^{r} q^{2n^2 -j^2} = \sum_{n\geq 1} (-1)^n q^{n^2} =\sum_{\substack{n+j =0 \\ n-j <0}} (-1)^{r} q^{2n^2 -j^2}.
 \end{equation}
\end{proof}

We conclude this section by summarizing the above choices of parameters for each function in a table.
\begin{table}[h!]
\begin{center}
\addtolength{\tabcolsep}{1mm}
\renewcommand{\arraystretch}{1.2}
\begin{tabular}{|c|c|c|c|c|c|c|}
\hline
\hline
\text{function} & \text{matrix $A$}& \text{parameters $a,a_1,a_2$} & \text{parameter $b$} & \text{parameter $c_1$} & \text{parameter $c_2$}     \\
\hline
\hline
$f_1$ &  $\left(\begin{smallmatrix} 8&0 \\0&-4 \end{smallmatrix} \right)$ &  $ a_1 =  \left(\begin{smallmatrix} \frac{5}{8} \\ \frac{1}{2} \end{smallmatrix}\right), \hspace{3mm} a_2 =  \left(\begin{smallmatrix} \frac{1}{8} \\ 0 \end{smallmatrix}\right)$ & $\left(\begin{smallmatrix} 0 \\ 0 \end{smallmatrix}\right)$ & $\left(\begin{smallmatrix} -\frac{1}{2} \\ 1 \end{smallmatrix}\right)$ &  $\left(\begin{smallmatrix} \frac{1}{2} \\ 1 \end{smallmatrix}\right)$  \\
\hline
$f_2$ &  $\left( \begin{smallmatrix}8&0 \\0&-4 \end{smallmatrix} \right)$ &  $ a_1 =  \left(\begin{smallmatrix}  \frac{3}{8} \\ 0 \end{smallmatrix}\right), \hspace{3mm} a_2 =  \left(\begin{smallmatrix} \frac{1}{8} \\ \frac{1}{2} \end{smallmatrix}\right)$ & $\left(\begin{smallmatrix} 0 \\ 0 \end{smallmatrix}\right)$ & $\left(\begin{smallmatrix} -\frac{1}{2} \\ 1 \end{smallmatrix}\right)$ &  $\left(\begin{smallmatrix} \frac{1}{2} \\ 1 \end{smallmatrix}\right)$  \\
\hline
$f_3$ &  $\left(\begin{smallmatrix} 4&0 \\0&-2 \end{smallmatrix} \right)$ &  $ a =  \left(\begin{smallmatrix} \frac{1}{2} \\ 0 \end{smallmatrix}\right)$ & $\left(\begin{smallmatrix} 0 \\ \frac14 \end{smallmatrix}\right)$ & $\frac{1}{\sqrt{2}} \left(\begin{smallmatrix} -1 \\ 2 \end{smallmatrix}\right)$ &  $\frac{1}{\sqrt{2}} \left(\begin{smallmatrix} 1 \\ 2 \end{smallmatrix}\right)$  \\
\hline
$f_4$ &  $\left(\begin{smallmatrix} 4&0 \\0&-2 \end{smallmatrix} \right)$ &  $ a =  \left(\begin{smallmatrix} 0 \\ 0 \end{smallmatrix}\right)$ & $\left(\begin{smallmatrix} 0 \\ \frac14 \end{smallmatrix}\right)$ & $ \frac{1}{\sqrt{2}}\left(\begin{smallmatrix} -1\\ 2 \end{smallmatrix}\right)$ &  $ \frac{1}{\sqrt{2}}\left(\begin{smallmatrix} 1 \\ 2 \end{smallmatrix}\right)$  \\
\hline
$f_5$ &  $\left(\begin{smallmatrix} 3&0 \\0&-1 \end{smallmatrix} \right)$ &  $ a=  \left(\begin{smallmatrix} \frac{1}{2} \\ \frac{1}{2}\end{smallmatrix}\right)$ & $\left(\begin{smallmatrix} 0 \\ 0 \end{smallmatrix}\right)$ & $\frac{1}{\sqrt{3}}\left(\begin{smallmatrix} -1\\ 3 \end{smallmatrix}\right)$ &  $\frac{1}{\sqrt{3}}\left(\begin{smallmatrix} 1 \\ 3 \end{smallmatrix}\right)$  \\
\hline
$f_6$ &  $\left(\begin{smallmatrix} 6&0 \\0&-2 \end{smallmatrix} \right)$ &  $ a_1 =  \left(\begin{smallmatrix} \frac{1}{2}  \\ 0 \end{smallmatrix}\right), \hspace{3mm} a_2 =  \left(\begin{smallmatrix} 0  \\ \frac{1}{2} \end{smallmatrix}\right)$ & $\left(\begin{smallmatrix} 0 \\ 0 \end{smallmatrix}\right)$ & $\frac{1}{\sqrt{6}}\left(\begin{smallmatrix} -1 \\ 3 \end{smallmatrix}\right)$ &  $\frac{1}{\sqrt{6}}\left(\begin{smallmatrix} 1 \\ 3 \end{smallmatrix}\right)$  \\
\hline
$f_7$ &  $\left(\begin{smallmatrix} 6&0 \\0&-2 \end{smallmatrix} \right)$ &  $ a =  \left(\begin{smallmatrix}\frac{1}{3} \\ 0 \end{smallmatrix}\right)$ & $\left(\begin{smallmatrix} \frac{1}{12} \\ \frac{1}{4}  \end{smallmatrix}\right)$ & $\frac{1}{\sqrt{6}}\left(\begin{smallmatrix} -1 \\ 3 \end{smallmatrix}\right)$ &  $\frac{1}{\sqrt{6}}\left(\begin{smallmatrix} 1 \\ 3 \end{smallmatrix}\right)$  \\
\hline
$f_8$ &  $\left(\begin{smallmatrix} 12&0 \\0&-4 \end{smallmatrix} \right)$ &  $ a_1 =  \left(\begin{smallmatrix} \frac{1}{3} \\ 0 \end{smallmatrix}\right), \hspace{3mm} a_2 =  \left(\begin{smallmatrix} -\frac{1}{6} \\ \frac{1}{2} \end{smallmatrix}\right)$ & $\left(\begin{smallmatrix} 0 \\ 0 \end{smallmatrix}\right)$ & $\frac{1}{\sqrt{12}}\left(\begin{smallmatrix} -1 \\ 3 \end{smallmatrix}\right)$ &  $\frac{1}{\sqrt{12}}\left(\begin{smallmatrix}1\\ 3 \end{smallmatrix}\right)$  \\
\hline
$LL$ &  $\left(\begin{smallmatrix} 4&0 \\0&-2 \end{smallmatrix} \right)$ &  $ a=  \left(\begin{smallmatrix} 0\\ 0 \end{smallmatrix}\right))$ & $\left(\begin{smallmatrix} \frac{1}{8} \\ -\frac{1}{4} \end{smallmatrix}\right)$ & $\frac{1}{\sqrt{2}} \left(\begin{smallmatrix} -1 \\ 2 \end{smallmatrix}\right)$ &  $\frac{1}{\sqrt{2}} \left(\begin{smallmatrix} 1 \\ 2 \end{smallmatrix}\right)$  \\
\hline
$L$ &  $\left(\begin{smallmatrix} 4&0 \\0&-2 \end{smallmatrix} \right)$ &  $ a =  \left(\begin{smallmatrix} 0 \\ 0 \end{smallmatrix}\right)$ & $\left(\begin{smallmatrix} \frac{1}{8} \\ 0 \end{smallmatrix}\right)$ & $\frac{1}{\sqrt{2}}\left(\begin{smallmatrix} -1\\ 2 \end{smallmatrix}\right)$ &  $\frac{1}{\sqrt{2}}\left(\begin{smallmatrix} 1\\ 2 \end{smallmatrix}\right)$  \\
\hline
\end{tabular}\\[1mm]
\caption{Parameters of indefinite theta series corresponding to the $q$-series $f_1 ,\dots ,f_8, LL, L$.}
\label{table}
\end{center}
\end{table}

\subsection{Proof of Theorem \ref{mainthm}}\label{sec:proofb} \ 

We are now in position to apply Lemma \ref{Zlem} and Theorem \ref{Zthm} to establish Theorem \ref{mainthm}.
\begin{proof}
 We first consider $f_1$. Recall $A,B,a_1,a_2,b$, $c_1$, and $c_2$ associated to $f_1$ from Lemma \ref{lem:f1}. Taking $\gamma = \left(\begin{smallmatrix} 3&2 \\4&3 \end{smallmatrix} \right)$, $\lambda_1 = \left(\begin{smallmatrix} 3 \\ 4 \end{smallmatrix} \right)$, $\lambda_2 = \left(\begin{smallmatrix} 1 \\ 1 \end{smallmatrix} \right)$, $\mu = \left(\begin{smallmatrix} 0 \\ 0 \end{smallmatrix} \right)$, and using Lemma \ref{Zlem}, we have
 \begin{equation}\notag
  \varphi^{c_1}_{a_1 ,b} = \varphi^{\gamma c_1}_{\gamma a_1 , \gamma b} = \varphi^{c_2}_{-a_2 +\lambda_1 ,-b +\mu} = e(B(-a_2 ,\mu)) \varphi^{c_2}_{-a_2 ,-b} = \varphi^{c_2}_{a_2 ,b}  
  \end{equation}
  and
  \begin{equation}\notag
  \varphi^{c_1}_{a_2 ,b} = \varphi^{\gamma c_1}_{\gamma a_2 , \gamma b} = \varphi^{c_2}_{-a_1 +\lambda_2 ,-b +\mu} = e (B(-a_1 ,\mu)) \varphi^{c_2}_{-a_1 ,-b} = \varphi^{c_2}_{a_1 ,b}.
  \end{equation}
  Therefore $(\varphi^{c_1}_{a_1 ,b} -\varphi^{c_2}_{a_1 ,b})+(\varphi^{c_1}_{a_2 ,b} -\varphi^{c_2}_{a_2 ,b})=0$. 

Next, it is easy to show that $a_j +\frac{1}{2}A^{-1}A^\star \in A^{-1} \mathbb{Z}^2$. Finally, 
one may check from Theorem~\ref{Zthm} that that the space
 \[ \text{span}_\mathbb{C} \{ \widehat{\Phi}_{a+p_1,b+b_2}, \widehat{\Phi}_{b +p_1, a+p_2} \mid p_1,p_2 \in A^{-1} \mathbb{Z}^2 \} \]
is closed under the transformations $\tau \mapsto -1/\tau$, and since $a_j +\frac{1}{2}A^{-1}A^\star \in A^{-1} \mathbb{Z}^2$ it is closed under $\tau \mapsto 1+\tau$ as well. Hence, the result of Theorem \ref{mainthm} for $f_1$ now follows from Theorem \ref{Zthm}.
  
  The proofs of the remaining functions follow similarly once the appropriate parameters are found. We note that for some of them, we get other relations between $c_1,c_2$, such as
  $\gamma a_1=a_2+\lambda_1$, or even $\gamma a_1=-a_1+\lambda_1$. The completion terms all can be shown by simple arguments to still vanish, and in particular we point the reader to the discussion of the proof of (14) in \cite{Zwegers-Maass} for a related argument.  
   We provide these inputs for each function in Table \ref{table} below, while the necessary equalities involving the $\varphi_{a,b}^{c_j}$ are left to the reader.

\begin{table}[h!]
\begin{center}
\addtolength{\tabcolsep}{1mm}
\renewcommand{\arraystretch}{1.2}
\begin{tabular}{|c|c|c|c|c|c|c|}
\hline
\hline
\text{function} & \text{matrix $\gamma$}& \text{parameter $\lambda$} & \text{parameter $\mu$}    \\
\hline
\hline
$f_1$ & $\left(\begin{smallmatrix} 3&2 \\4&3 \end{smallmatrix} \right)$ & $\lambda_1 =  \left(\begin{smallmatrix} 3 \\ 4 \end{smallmatrix} \right)$, $\lambda_2 = \left(\begin{smallmatrix} 1 \\ 1 \end{smallmatrix} \right) $ & $\left(\begin{smallmatrix} 0 \\ 0 \end{smallmatrix} \right)$   \\
\hline
$f_2$ & $\left(\begin{smallmatrix} 3&2 \\4&3 \end{smallmatrix} \right)$ & $\lambda_1 =  \left(\begin{smallmatrix} 1 \\ 1 \end{smallmatrix} \right)$, $\lambda_2 =  \left(\begin{smallmatrix} 1 \\ 2 \end{smallmatrix} \right)$ & $\left(\begin{smallmatrix} 0 \\ 0 \end{smallmatrix} \right)$   \\
\hline
$f_3$ & $\left(\begin{smallmatrix} 3&2 \\4&3 \end{smallmatrix} \right)$ & $\left(\begin{smallmatrix} 1 \\ 2 \end{smallmatrix} \right)$ & $\frac{1}{2}\left(\begin{smallmatrix} 1 \\ 1 \end{smallmatrix} \right)$   \\
\hline
$f_4$ & $\left(\begin{smallmatrix} 3&2 \\4&3 \end{smallmatrix} \right)$ & $\left(\begin{smallmatrix} 0 \\ 0 \end{smallmatrix} \right)$ & $\frac{1}{2}\left(\begin{smallmatrix} 1 \\ 1 \end{smallmatrix} \right)$   \\
\hline
$f_5$ & $\left(\begin{smallmatrix} 2&1 \\3&2 \end{smallmatrix} \right)$ & $\left(\begin{smallmatrix} 1 \\ 2 \end{smallmatrix} \right)$ & $\left(\begin{smallmatrix} 0 \\ 0 \end{smallmatrix} \right)$   \\
\hline
$f_6$ & $\left(\begin{smallmatrix} 2&1 \\3&2 \end{smallmatrix} \right)$ & $\lambda_1 =  \left(\begin{smallmatrix} 1 \\ 1 \end{smallmatrix} \right)$, $\lambda_2 =  \left(\begin{smallmatrix} 0 \\ 1 \end{smallmatrix} \right)$ & $\left(\begin{smallmatrix} 0 \\ 0 \end{smallmatrix} \right)$   \\
\hline
$f_7$ & $\left(\begin{smallmatrix} 2&1 \\3&2 \end{smallmatrix} \right)$ & $\left(\begin{smallmatrix} 1 \\ 1 \end{smallmatrix} \right)$ & $\frac{1}{2}\left(\begin{smallmatrix} 1 \\ 2 \end{smallmatrix} \right)$   \\
\hline
$f_8$ & $\left(\begin{smallmatrix} 2&1 \\3&2 \end{smallmatrix} \right)$ & $\lambda_1 =  \left(\begin{smallmatrix} 1 \\ 1 \end{smallmatrix} \right)$, $\lambda_2 =  \left(\begin{smallmatrix} 1 \\ 2 \end{smallmatrix} \right)$ & $\left(\begin{smallmatrix} 0 \\ 0 \end{smallmatrix} \right)$   \\
\hline
$LL$ & $\left(\begin{smallmatrix} 3&2 \\4&3 \end{smallmatrix} \right)$ & $\left(\begin{smallmatrix} 0 \\ 0 \end{smallmatrix} \right)$ & $-\frac{1}{2}\left(\begin{smallmatrix} 0 \\ 1 \end{smallmatrix} \right)$   \\
\hline
$L$ & $\left(\begin{smallmatrix} 3&2 \\4&3 \end{smallmatrix} \right)$ & $\left(\begin{smallmatrix} 0 \\ 0 \end{smallmatrix} \right)$ & $\frac{1}{2}\left(\begin{smallmatrix} 1 \\ 1 \end{smallmatrix} \right)$   \\
\hline
\end{tabular}\\[1mm]
\caption{Parameters for the functions $f_1 ,\dots ,f_8, LL, L$.}
\label{table}
\end{center}
\end{table}
The final result for all functions, and thus Theorem \ref{mainthm}, now follows from Theorem \ref{Zthm} and the discussion above.
\end{proof}
 
\begin{rmk*}
 A similar analysis can be performed for $W_1$ using the parameters $A= \left(\begin{smallmatrix} 4&0 \\0&-2 \end{smallmatrix} \right)$,
  \begin{equation}
  \notag
  a = \begin{pmatrix} \frac{1}{4} \\ 0 \end{pmatrix}, \hspace{3mm} b=\begin{pmatrix} \frac{1}{8} \\ -\frac{1}{4} \end{pmatrix} ,\hspace{3mm} c_1 = \frac{1}{\sqrt{2}} \begin{pmatrix} -1 \\ 2 \end{pmatrix}, \hspace{3mm}  \text{and} \hspace{3mm} c_2 = \frac{1}{\sqrt{2}} \begin{pmatrix} 1 \\ 2 \end{pmatrix},
  \end{equation}
  and taking $\gamma = \left(\begin{smallmatrix} 3&2\\4&3 \end{smallmatrix} \right)$, $\lambda =  \left(\begin{smallmatrix} 1 \\ 1 \end{smallmatrix} \right)$, and $\mu = -\frac{1}{2}\left(\begin{smallmatrix} 0 \\ 1 \end{smallmatrix} \right)$.
  
 The parameters for $W_2$ are $A= \left(\begin{smallmatrix} 4&0 \\0&-2 \end{smallmatrix} \right)$,
  \begin{equation}
  \notag
  a = -\begin{pmatrix} \frac{1}{4} \\ \frac{1}{2} \end{pmatrix}, \hspace{3mm} b=\begin{pmatrix} \frac{1}{8} \\ 0 \end{pmatrix} ,\hspace{3mm} c_1 = \frac{1}{\sqrt{2}} \begin{pmatrix} -1 \\ 2 \end{pmatrix}, \hspace{3mm}  \text{and} \hspace{3mm} c_2 = \frac{1}{\sqrt{2}} \begin{pmatrix} 1 \\ 2 \end{pmatrix},
  \end{equation}
  along with $\gamma = \left(\begin{smallmatrix} 3&2\\4&3 \end{smallmatrix} \right)$, $\lambda =  -\left(\begin{smallmatrix} 2 \\ 3 \end{smallmatrix} \right)$, and $\mu = \frac{1}{2}\left(\begin{smallmatrix} 1 \\ 1 \end{smallmatrix} \right)$.
\end{rmk*}



\begin{thebibliography}{99}

 \bibitem{AndrewsLostNotebookV} Andrews, G. E.: {\it Ramanujan's ``Lost'' Notebook V: Euler's Partition Identity}, Adv. Math. {\bf 61}, 156--164 (1986).

\bibitem{AndrewsDysonHickerson} Andrews, G. E. , Dyson F., and Hickerson, D.: {\it Partitions and indefinite quadratic forms}, Invent. Math. {\bf 91}, 391--407 (1988).

\bibitem{BK-Multiplicative}
 Bringmann, K. and Kane, B.: {\it Multiplicative $q$-Hypergeometric series arising from real quadratic forms}, Trans. Amer. Math. Soc. \textbf{363}, 2191--2209 (2011). 
 
 \bibitem{Cohen}  Cohen, H.: {\it $q$-identities for Maass waveforms}, Invent. Math. {\bf 91}, no. 3, 409--422 (1988).
 
\bibitem{CFLZ-Characters}
 Corson, D., Favero, D., Liesinger, K., and Zubairy, S.: {\it Characters and $q$-series in $\mathbb{Q}(\sqrt{2})$}, J. of Number Theory. \textbf{107}, 392--405 (2004).
 
\bibitem{LewisZagier1} Lewis, J. and Zagier, D.: {\it Period functions and the Selberg zeta function for the modular group}, ``The Mathematical Beauty of Physics, A Memorial Volume for Claude Itzykson'' (J.M.
Drouffe and J.B. Zuber, eds.), Adv. Series in Mathematical Physics {\bf 24}, World Scientific, Singapore, 83--97 (1997).

\bibitem{LewisZagier2} Lewis, J. and Zagier, D.: {\it Period functions for Maass wave forms. I}, Ann. Math. {\bf 153}, 191--258 (2001).
 
\bibitem{Lovejoy}
 Lovejoy, J.: {\it Overpartitions and real quadratic quadratic fields}, J. Number Theory \textbf{106}, 178--186 (2004). 
 
 
 \bibitem{LNR-Renorm2}
 Li, Y., Ngo, H., and Rhoades, C.: {\it Renormalization and quantum modular forms, part I: Maass wave forms}, preprint, arXiv:1311.3043. 
 \bibitem{Zagier}
Zagier, D.: {\it Quantum modular forms}, Quanta of Maths: Conference in honor of Alain Connes, Clay Mathematics Proceedings {\bf 11}, AMS and Clay Mathematics Institute, 659--675 (2010).
 
\bibitem{Zwegers-Maass}
 Zwegers, S.: {\it Mock Maass theta functions},  Q. J. Math. {\bf 63}, no. 3, 753--770 (2012).
 
\end{thebibliography}
\end{document}